\newtheorem{thm}{Theorem}[section]
\newtheorem{prop}[thm]{Proposition}
\newtheorem{lem}[thm]{Lemma}
\newtheorem{cor}[thm]{Corollary}
\newtheorem{remark}[thm]{Remark}   
\newtheorem{exam}[thm]{Example}
\def\DD{\mathbb D}
\def\TT{\mathbb T}
\def\dist{\mathop{\rm dist}\nolimits}
\def\imag{\mathop{\rm Im}\nolimits}
\def\re{\mathop{\rm Re}\nolimits}
\def\supp{\mathop{\rm supp}\nolimits}
\def\tr{\mathop{\rm tr}\nolimits}
\def\rank{\mathop{\rm rank}\nolimits}
\title{Norms of truncated Toeplitz operators and numerical radii of restricted shifts}
\author{Pamela Gorkin %\\ \small{Department of Mathematics, Bucknell University, Lewisburg, PA 17837, U.S.A.} \\ \small{\tt E-mail: pgorkin@bucknell.edu} 
 and Jonathan R. Partington}
\begin{document}

\maketitle

\begin{abstract}
This paper gives a new approach to the calculation of the numerical radius of a restricted
shift operator by linking it to the norm of a truncated Toeplitz operator (TTO), which can 
be calculated by various methods. Further results on the norm of a TTO
are derived, and a conjecture on the existence of continuous symbols for compact
TTO is resolved.
\end{abstract}

\noindent Keywords: restricted shift, truncated Toeplitz operator, numerical radius, Hankel operator, Blaschke product\\

\noindent MSC (2010): 47B35, 47A12, 30H10.

\section{Introduction}

Given a bounded operator $T$ on a Hilbert space the {\it numerical range} of $T$ is defined by
\[W(T) = \{\langle Tx, x\rangle: \|x\| = 1\}.\]
For operators on spaces of dimension $2$ the elliptical range theorem \cite{GR, CKLI} tells us that the numerical range of $T$ is elliptical with foci at the eigenvalues $a,b$ of $T$ and minor axis of length 
\[\left(\tr(T^\star T) - |a|^2 - |b|^2\right)^{1/2}.\] For operators on spaces of larger dimension, it is usually difficult to describe the numerical range explicitly, though it can be done in many special cases. 
The numerical range plays an important role in applications 
such as the stability of linear systems (see, e.g., \cite{AM,fan-tits}),
 in part because it always contains the eigenvalues of the operator $T$. Even computing the {\it numerical radius} of $T$, denoted $w(T)$ and defined by
\[w(T) = \sup\{|\omega|: \omega \in W(T)\},\] is difficult but useful; for example, it gives us bounds on the norm of the operator:
\[\frac{1}{2} \|A\| \le w(A) \le \|A\|,\]
and more general estimates for analytic functions of the operator.   The authors of this paper were motivated to study the numerical radius partly because of its possible connections to the following:\\  

{\bf Crouzeix conjecture.} For every polynomial $p$ and matrix $A$, we have
\[\|p(A)\| \le 2 \max\{|p(z)|: z \in W(A)\}.\] 
This question was first stated in 2004 \cite{C2004} and important recent progress has been made \cite{CP2017}, but at the time of this writing the question remains open.  

In this paper we focus on a particular class of operators known as compressions of the shift operator and consider the numerical radius of the operators in this class, where little is known. Much more is known about the geometry of the numerical range of these operators and we refer the reader to
\cite{G2, GW, GW1, GW2, GW2016, M}.   Though the work in these papers may also be used to solve some of the problems mentioned here, our goal is to add other methods that may be useful. 

We turn to the class of operators of interest. Let $L^2$ denote the Lebesgue space on the circle $\TT=\partial\DD$, $H^2$ denote the classical Hardy space on the open unit disk $\mathbb{D}$, and $S : H^2 \to H^2$ defined by $(Sf)(z) = z  f(z)$ the {\it shift operator}. Let $P_-$ denote the orthogonal projection of $L^2$ onto $L^2 \ominus H^2$. An inner function is a bounded analytic function with radial limits of modulus $1$ almost everywhere. Beurling's theorem tells us that the closed, nontrivial, invariant subspaces for $S$ are precisely those of the form $u H^2$, with $u$ a nonconstant inner function. Thus, the nontrivial invariant subspaces for the adjoint of $S$, denoted $S^*$, are precisely the ones of the form $K_u = H^2 \ominus u H^2$. In this paper, we consider {\it compressions of the shift operator}: Let $u$ be an inner function and $S_u : K_u \to K_u$ be defined by $S_u = P_{K_u}S|_{K_u}$, where $P_{K_u}$ is the projection of $L^2$ onto $K_u$. On $H^2$ we have \[P_{K_u}(f) = f - uP_+(\overline{u}f) = u P_-(\overline{u} f).\] 

If the symbol $u$ of the compression of the shift is a discontinuous inner function, then $w(S_u) = 1$. Thus, as we study compressions of the shift operator, we focus on continuous inner functions; that is, we consider finite Blaschke products as the symbol when we try to determine the numerical radius of $S_u$.

The first main result in this paper allows us to compute the numerical radius of $S_B$ when $B$ is a finite Blaschke product and all zeros are real. This gives precise formulas for $w(S_B)$ for Blaschke products of degree up to $4$ and an algorithm for computing $w(S_B)$ when $B$ has degree greater than $4$. The proof is accomplished by using an algorithm developed by Foias and Tannenbaum (\cite{FT87}, see also \cite{FTZ88} and Section~\ref{sec:norm}) together with a result that connects the numerical range to the norms of certain closely related truncated Toeplitz operators. Then, via interpolation, we are able to produce an algorithm that involves computing the zeros of a special class of polynomials. In theory, then, the numerical radius is obtained via these zeros.

Note that every finite matrix $A$ that defines a contraction with spectrum in the unit disk satisfying $\rank(I-A^*A)=1$
is unitarily equvalent to an operator $S_B$ for $B$ a finite Blaschke product (see \cite{GW2}), so that our results apply in 
(formally) more general situations.

In Section~\ref{TTO}, we discuss more general {\it truncated Toeplitz operators} with symbol
$\varphi \in L^2$, or $A_\varphi^u$, defined on the dense subset $K_u \cap L^\infty$ of $K_u$ by 
\[A_\varphi^u(f) = P_{K_u}(\varphi f), ~\mbox{where}~ u~\mbox{is an inner function}.\]  
%\color{red} In particular, we obtain estimates on the norm $\|A_\varphi^u\|$ extending those in \cite{GR}, provide an example requested in \cite{CFT} 
%involving compact truncated Toeplitz operators, and prove related results. \color{black} 
Work in the earlier sections focuses on the study of the distance from $(1 + a z)$ to $BH^\infty$ where $B$ is a finite Blaschke product, while the results of later sections consider the distance to the Sarason algebra, $H^\infty + C(\mathbb{T})$; that is, we consider  $\|g + B(H^\infty + C(\mathbb{T}))\|$ where $g \in H^\infty + C(\mathbb{T})$ and $B$ is an interpolating Blaschke product.  We obtain asymptotic distance estimates for a function $f \in H^\infty + C$ to $B(H^\infty + C(\mathbb{T})$ when $B$ is an interpolating Blaschke product, with more precise results in the case when the Blaschke product is a thin interpolating Blaschke product. Then we generalize a result of Ahern and Clark on compactness of truncated Toeplitz operators and conclude the paper with an example that provides an answer to a question stated in \cite{CFT}, constructing a compact truncated Toeplitz operator with no continuous symbol.

\section{Notation}
\label{Notation}

For $B$ a finite Blaschke product with zeros $a_j \in \mathbb{D}$, the corresponding reproducing kernels 
\[k_{a_j}(z) = \frac{1}{1 - \overline{a_j} z}\] lie in $K_B$, and in fact form a basis for $K_B$ when the zeros 
of $B$ are distinct.
When the zeros   are distinct, we obtain an orthonormal basis by applying the Gram--Schmidt process   to the reproducing kernels. This is called the {\it Takenaka--Malmquist basis}. In order for the matrix to be upper triangular, we find the matrix representation with respect to this basis in the reverse order and we note that we also will need to reorder the zeros of the Blaschke product. This can also be adjusted for non-distinct zeros and does not affect the matrix representation we give. In any event, the matrix representing the compression $S_B$ when $B$ is a finite Blaschke product with zeros $(a_j)$ can be written as
\begin{equation}
\label{eqn:A}
 a_{ij} = \left\{ \begin{array}{ll}
a_j & \mbox{if}~ i = j,\\
\big(\prod_{k = i + 1}^{j - 1}(-\overline{a_k})\big)\sqrt{1 -
|a_i|^2}\sqrt{1 -
|a_j|^2} & \mbox{if}~ i < j,\\ 0 & \mbox{if}~ i > j.
\end{array} \right. \end{equation}
For $\varphi \in L^\infty$, let $M_\varphi: L^2 \to L^2$ denote the {\it multiplication operator} defined by $M_\varphi f = \varphi f$. The {\it Toeplitz operator} $T_\varphi: H^2 \to H^2$ is defined by $T_\varphi = P_+ M_\varphi$ and the {\it Hankel operator} $H_\varphi: H^2 \to L^2 \ominus H^2$ is $H_\varphi = P_- M_\varphi$. It should be noted that $H_\varphi = H_\psi$ if and only if $\varphi - \psi \in H^\infty$. The Hankel operator $H_\varphi$ is compact if and only if $\varphi$ is in the algebra 
\[H^\infty + C(\mathbb{T})  : = \{f + c: f \in H^\infty, c \in C(\mathbb{T})\},\] where $C(\mathbb{T})$ is the algebra of continuous functions on the unit circle. This algebra is often called the {\it Sarason algebra}, in honor of D. Sarason who proved that it is a closed subalgebra of $L^\infty$. 

By considering operators defined on dense subsets, it is possible to  consider Toeplitz, Hankel and multiplication operators defined on $L^2$. In this spirit,
for $\varphi \in L^2$ we may define the truncated Toeplitz operator $A_\varphi^u$ on the dense subset $K_u \cap L^\infty$ of $K_u$ by 
\[A_\varphi^u(f) = P_{K_u}(\varphi f).\]  Recent surveys on truncated Toeplitz operators can be found in \cite{CFT, GR2013} and related results appear in \cite{B2015,CP16}.

\section{The norm computation}\label{sec:norm}

\subsection{Links with Hankel operators and interpolation}

Theorem~\ref{thm:main} below can be used to compute the numerical radius $w(T)$ of a bounded operator $T$. If we   maximize $\re W(e^{i \theta} T)$ over all $\theta$ with $0 \le \theta < 2\pi$, we will obtain $w(T)$  (see also \cite{KIPP}).

\begin{thm}\label{thm:main}\cite[Thm. 5, p. 17]{BD71} Let $T$ be an operator. Then

\[
\max \{ \re \lambda: \lambda \in W(T) \} = \lim_{a \to 0+} \frac{1}{a} \left\{
\|I+aT\|-1 \right\}.
\]
\end{thm}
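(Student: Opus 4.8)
The plan is to give a direct, self-contained argument using only the Hilbert space inner product: write $v := \max\{\re\lambda : \lambda\in W(T)\} = \sup_{\|x\|=1}\re\langle Tx,x\rangle$ and show that the difference quotient $a\mapsto \tfrac1a(\|I+aT\|-1)$ is squeezed between $v$ and a quantity that tends to $v$ as $a\to 0+$. One may first observe that $a\mapsto\|I+aT\|$ is convex on $[0,\infty)$, being a pointwise supremum of the convex functions $a\mapsto\|(I+aT)x\|$, and Lipschitz with constant $\|T\|$; hence the one-sided limit on the right-hand side certainly exists and is finite (it is the right derivative at $0$), so only its value is in question.

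For the lower bound, fix a unit vector $x$. Then for $a>0$,
\[
\|I+aT\| \;\ge\; |\langle (I+aT)x,x\rangle| \;=\; |1+a\langle Tx,x\rangle| \;\ge\; \re\bigl(1+a\langle Tx,x\rangle\bigr) \;=\; 1 + a\,\re\langle Tx,x\rangle .
\]
Taking the supremum over unit vectors $x$ gives $\|I+aT\|\ge 1+av$, so $\tfrac1a(\|I+aT\|-1)\ge v$ for every $a>0$, whence $\liminf_{a\to0+}\tfrac1a(\|I+aT\|-1)\ge v$.

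For the upper bound, expand the norm:
\[
\|I+aT\|^2 \;=\; \sup_{\|x\|=1}\|(I+aT)x\|^2 \;=\; \sup_{\|x\|=1}\Bigl(1 + 2a\,\re\langle Tx,x\rangle + a^2\|Tx\|^2\Bigr) \;\le\; 1 + 2av + a^2\|T\|^2 ,
\]
using $\re\langle Tx,x\rangle\le v$ and $\|Tx\|\le\|T\|$. Hence
\[
\frac{\|I+aT\|-1}{a} \;\le\; \frac{\sqrt{1+2av+a^2\|T\|^2}-1}{a} \;=\; \frac{2v + a\|T\|^2}{\sqrt{1+2av+a^2\|T\|^2}+1}\;\xrightarrow[a\to0+]{}\; v ,
\]
so $\limsup_{a\to0+}\tfrac1a(\|I+aT\|-1)\le v$. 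Combining the two estimates, the limit exists and equals $v$.

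There is no serious obstacle in the Hilbert space setting; the only point requiring care is that one is effectively interchanging a supremum (hidden in the operator norm) with the limit in $a$, and this is precisely what the explicit two-sided estimate above circumvents. If the underlying space is infinite-dimensional, so that $W(T)$ need not be closed, one simply reads $\max$ as $\sup$ throughout — in our applications $K_B$ is finite-dimensional, so $W(S_B)$ is compact and the maximum is attained. The genuinely more delicate version is the one in \cite{BD71}, valid for elements of an arbitrary normed algebra, where no inner product is available and one must argue via states / semi-inner-product functionals (Lumer's approach); for the present purposes the elementary argument above suffices.
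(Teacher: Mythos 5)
Your argument is correct. Note, though, that the paper does not prove this statement at all: it simply cites it as Theorem 5, p.~17 of Bonsall and Duncan \cite{BD71}, where it is established for the algebra numerical range of an element of an arbitrary unital normed algebra via convexity of $\alpha \mapsto \|1+\alpha T\|$ and a sub-differential/states argument. Your squeeze proof is a clean, self-contained specialization to Hilbert space: the lower bound $\|I+aT\| \ge 1 + a\,\re\langle Tx,x\rangle$ and the upper bound $\|I+aT\|^2 \le 1 + 2av + a^2\|T\|^2$ are both valid, the rationalization of $\bigl(\sqrt{1+2av+a^2\|T\|^2}-1\bigr)/a$ is computed correctly, and the opening remark that the right derivative exists by convexity (pointwise supremum of the convex maps $a\mapsto\|(I+aT)x\|$) is a nice touch, even though your two-sided estimate makes it redundant. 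Your caveat about $\max$ versus $\sup$ is also the right one: the spatial numerical range $W(T)$ need not be closed in infinite dimensions, whereas the Bonsall--Duncan numerical range is compact, which is why their statement can say $\max$; in the paper's application $K_B$ is finite-dimensional so the distinction is immaterial. What the citation buys the authors is generality (normed algebras, no inner product); what your argument buys is a short, elementary, fully explicit proof in exactly the setting the paper uses. One microscopic point worth recording if you keep the proof: $1+2av+a^2\|T\|^2 \ge (1-a\|T\|)^2 \ge 0$ since $v \ge -\|T\|$, so the square root is always defined.
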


 We are now able to rephrase the numerical radius computation in terms of a computation of the norm of an operator, which can be handled by the algorithm in \cite{FT87} that we describe in Section~\ref{sec:FT}.
 
  We first consider a Blaschke product with distinct zeros.   Let $a > 0$. We want $\|I+a S_B\| = ~\mbox{dist}((1 + a z)/B(z), H^\infty)$, the norm of a truncated Toeplitz operator, which is also well known to be  
the norm of the Hankel operator with symbol $(1+az)/B(z)$ as in \cite[Prop. 2.1]{sarason67}.

Similarly, for a general complex $t$ we may consider the norm of the Hankel operator with symbol $(1+tz)/B(z)$. Since a finite Blaschke product is continuous, $H_B$ is compact when $B$ is a finite Blaschke product.
When the norm is attained and takes the value $\gamma$ we have
\[
1+tz= B(z) g(z) + \gamma h(z),
\]
where $h \in H^\infty$ and $\|h\|_\infty \le 1$ (in fact it is a Blaschke product).
Equivalently, we can solve the interpolation
$h(z_k)=(1+tz_k)/\gamma$
for each $k$ where $h \in H^\infty$ with $\|h\|_\infty \le 1$ and the $z_k$ are the zeros of $B$.

By Nevanlinna--Pick theory (see, for example \cite{P97}) this is possible if and only if
the matrix $M$ with $(j,k)$ entry
\begin{equation}
  \frac{1- (1+t z_j)(1+ \overline{ t z_k})/\gamma^2 }{1-z_j \overline z_k } \label{eq:pickmatrix} 
\end{equation}
is positive semi-definite.
In the case that the $z_k$ are real, writing $t=|t|e^{i\theta}$, this becomes
\[
 \frac{1-(1+|t|^2 z_j z_k + |t| (z_j+z_k)\cos\theta+i |t|  (z_j-z_k)\sin\theta)/\gamma^2}{1-z_j z_k} .
\]

\subsection{The algorithm of Foias and Tannenbaum}\label{sec:FT}

We now recall the algorithm from \cite{FT87} and \cite{FTZ88}
as applied to the situation described above. By restricting to the special case that gives us the numerical radius,
we may make some simplifications, as follows. For the convenience of the reader
we shall mostly use the same notation as in \cite{FT87}.

For $\rho>0$, let 
\[
P_\rho = I - \frac{1}{4 \rho^2} (I+aS_B)(I+ \overline a S_B^*)
\]
and let
\[
\mu_*(z) = 1- B(z) \overline{B(0)}.
\]
The basic idea of \cite{FT87} is that $P_\rho$ will become singular for various values of $\rho$ and the largest $\rho$ is the norm of $(I + a S_B)/2$. Combining this with Theorem~\ref{thm:main} we will be able to provide the estimates necessary to compute the numerical radius of $S_B$ where $B$ is a finite Blaschke product. We introduce the relevant notation and then recall the main theorem of \cite{FT87} here for convenience.

Recalling the notation $(x \otimes y)(w) := \langle w, y\rangle x$ for $x, y,$ and $w$ in a Hilbert space $H$, we note that $I-S_BS_B^* = \mu_* \otimes \mu_*$ on $K_B$, see \cite{NF}. \\

Write $\nu_*(\rho)= P_\rho^{-1}\mu_*$ and
\[\nu(\rho) := \mbox{trace}(I - S_B S_B^\star) P_\rho^{-1} = \langle \nu_*(\rho),\mu_* \rangle.\]

The following is the main result of \cite{FT87}.

\begin{thm} Let $I$ be an inner function, $R$ a rational function, and let $\rho_s = \max\{|R(z)|: z \in \sigma(S_I)~\mbox{with}~|z| = 1\}$ ($\rho_s$ is the essential spectral radius of $R(S_I)$). Consider $\nu(\rho)$ as a function of $\rho$ in the interval $J: = (\rho_s, 1]$. If $\nu(\rho)$ is defined on all of $J$, then we have $\|R(S_I)\| = \rho_s$. Otherwise, there exists an interval $(\overline{\rho}, 1]$ with $\rho_s < \overline{\rho} < 1$ such that $\nu(\rho) \to \infty$ as $\rho \to \overline{\rho}+$ and $\|R(S_I)\| = \overline{\rho}$.
\end{thm}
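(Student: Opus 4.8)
The plan is to follow the classical commutant-lifting / Sarason interpolation framework underlying \cite{FT87}, specializing the argument to the rational-symbol setting. The starting observation is that, writing $R(S_I)$ for the operator in question, one has $\|R(S_I)\| = \operatorname{dist}(R, H^\infty + \mathcal{M})$ for an appropriate module over $I H^\infty$, and more concretely the norm equals the smallest $\rho$ for which a certain Pick-type problem is solvable; equivalently, $\|R(S_I)/2\|$ is the largest $\rho$ for which the operator $P_\rho$ introduced above is positive semidefinite. So the first step is to record the standard fact that $\|R(S_I)\| \le 1$ forces $\|R(S_I)\| \in [\rho_s, 1]$, since $R(S_I)$ differs from the (normal, essentially determined) "boundary part'' of $R$ acting on $K_I$ by a compact perturbation whose essential norm is exactly $\rho_s = $ ess-spec-radius of $R(S_I)$; thus the interesting range of $\rho$ is precisely $J = (\rho_s, 1]$.

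Next I would analyze the rank-one structure: $I - S_I S_I^\star = \mu_* \otimes \mu_*$, and hence $I - \tfrac{1}{4\rho^2}(I + aS_I)(I + \overline a S_I^\star)$ — or the appropriate analogue $P_\rho$ for general $R$ — is a self-adjoint operator that, for $\rho$ large, is positive and invertible, and whose possible loss of positivity is governed by a one-dimensional Schur-complement condition. The key computation is to express the solvability of the interpolation (equivalently, positivity of $P_\rho$) in terms of the scalar quantity $\nu(\rho) = \langle P_\rho^{-1}\mu_*, \mu_*\rangle = \operatorname{trace}\big((I - S_I S_I^\star)P_\rho^{-1}\big)$. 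The point is that $\nu(\rho)$ is a real-analytic, strictly monotone function of $\rho$ on any interval where $P_\rho$ stays invertible, with $\nu(\rho) > 0$ there, and $\nu(\rho) \to +\infty$ exactly as $\rho$ decreases to a value $\overline\rho$ where $P_\rho$ first becomes singular (an eigenvalue of $P_\rho$ hits $0$ and the resolvent blows up along $\mu_*$). Monotonicity in $\rho$ comes from differentiating $P_\rho$ in $\rho$ and using $\tfrac{d}{d\rho}P_\rho \ge 0$ together with the resolvent identity.

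From there the dichotomy is immediate: either $P_\rho > 0$ for every $\rho \in J$, in which case $\nu$ is defined and finite throughout $J$, the interpolation is solvable for every $\rho > \rho_s$, and $\|R(S_I)\| \le \rho_s$ (hence $= \rho_s$, using the reverse essential-norm bound from Step 1); or there is a first $\overline\rho \in (\rho_s, 1)$ at which $P_{\overline\rho}$ degenerates, $\nu(\rho)\to\infty$ as $\rho\to\overline\rho+$, and then $\|R(S_I)\| = \overline\rho$ because for $\rho > \overline\rho$ the Pick matrix is positive (semi)definite so the interpolant of norm $\le 1$ exists, while for $\rho < \overline\rho$ it fails. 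The main obstacle — the step requiring genuine care rather than bookkeeping — is establishing that $\nu(\rho)$ actually diverges (rather than tending to a finite limit) as $\rho \to \overline\rho+$: this requires checking that $\mu_*$ is \emph{not} orthogonal to the kernel of the limiting singular operator $P_{\overline\rho}$, equivalently that the relevant Schur complement genuinely vanishes at $\overline\rho$ and that $\mu_*$ has a nonzero component along the dying eigenvector. This is where the cyclicity of $\mu_*$ for $S_I$ (a consequence of $\operatorname{rank}(I - S_I S_I^\star) = 1$) enters decisively, and it is the heart of the Foias--Tannenbaum argument; I would isolate it as a lemma and reduce it to a statement that the minimal $\rho$ for which $P_\rho \ge 0$ coincides with the minimal $\rho$ for which $P_\rho$ is merely invertible on the cyclic subspace generated by $\mu_*$, which here is all of $K_I$.
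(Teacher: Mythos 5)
This theorem is not proved in the paper at all: it is quoted verbatim as the main result of Foias and Tannenbaum \cite{FT87}, and the paper only recalls the surrounding notation ($P_\rho$, $\mu_*$, $\nu_*$, $\nu$) before applying the result. So there is no in-paper proof to compare against; your proposal has to stand on its own as a reconstruction of the Foias--Tannenbaum argument.

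As such a reconstruction it has the right skeleton (essential normality of $S_I$ giving $\|R(S_I)\|_e=\rho_s$; positivity of $P_\rho$ as the solvability criterion; the rank-one defect $I-S_IS_I^*=\mu_*\otimes\mu_*$ turning the invertibility question into the scalar function $\nu(\rho)$; monotonicity of $\nu$ from $\tfrac{d}{d\rho}P_\rho\ge 0$), but it has one genuine gap and one slip. The gap is exactly the step you flag and then defer: showing that $\nu(\rho)\to\infty$ as $\rho\to\overline\rho+$, i.e.\ that $\mu_*$ has a nonzero component along $\ker P_{\overline\rho}$. Your proposed reduction to ``cyclicity of $\mu_*$ for $S_I$'' does not close this: $\ker P_{\overline\rho}$ is not an $S_I$-invariant subspace, so cyclicity of $\mu_*$ gives no a priori reason why $\mu_*$ cannot be orthogonal to it. The actual mechanism (visible in the paper's equation \eqref{eq:21}) is that if $x\in\ker P_{\overline\rho}$ satisfied $\langle x,\mu_*\rangle=0$, then $x$ would solve a two-term recurrence of the form $(\alpha I-\beta S_I-\gamma S_I^*)x=0$ with no $\mu_*$ forcing term, and one must argue from the structure of $S_I$ on $K_I$ that this forces $x=0$; that argument is the heart of \cite{FT87} and is absent from your sketch. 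The slip: you assert that $\|R(S_I)/2\|$ is the \emph{largest} $\rho$ for which $P_\rho$ is positive semidefinite, whereas $P_\rho\ge 0$ holds precisely for $\rho\ge\|R(S_I)/2\|$, so the norm is the \emph{smallest} such $\rho$ (equivalently, the largest $\rho$ at which $P_\rho$ is singular, which is how the paper phrases it). Finally, your argument is written for $R(z)=1+az$; the theorem is stated for general rational $R$, where $P_\rho=I-\rho^{-2}R(S_I)R(S_I)^*$ and the reduction to a finite system via the zeros of the symbol of $\nu_*$ requires more bookkeeping than the degree-one case you treat.
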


In the case at hand, $I$ is a finite Blaschke product and the operator we consider is finite rank so, consequently, $\rho_s = 0$. Thus we focus on solving for $\overline{\rho}$.
Foias and Tannenbaum give an explicit algorithm to find $\overline{\rho}$, which we recall here.

A computation shows that 
\[
\left( I-\frac{1}{4\rho^2}\left(I+aS_B+\overline a S_B^* + |a|^2 (S_BS_B^*-I) + |a|^2 I\right) \right) \nu_*(\rho)=\mu_*.
\]
But
\[
(I-S_BS_B^*)\nu_*(\rho)=\nu(\rho) \mu_*,
\]
so we have
\begin{equation}\label{eq:21}
\left(\left(
1- \frac{1}{4 \rho^2} - \frac{|a|^2}{4 \rho^2} \right)I - \frac{a}{4 \rho^2} S_B
- \frac{\overline a}{4 \rho^2} S_B^* 
\right) \nu_*(\rho)=
\left(1 - \frac{|a|^2}{4 \rho^2} \nu(\rho) \right) \mu_*.
\end{equation}
Since $\overline{B} \nu_*(\rho) \perp H^2$ we may write
$\overline B \nu_*(\rho)= \nu_{-1}\overline z + o(\overline z^2)$. Then
\begin{eqnarray*}
S_B^* \nu_*(\rho) &=& \overline z( \nu_*(\rho)-\nu_*(\rho)(0)) ,\\
S_B \nu_*(\rho) &=& z \nu_*(\rho) - B \nu_{-1},
\end{eqnarray*}
and note that $\nu_*(\rho)(0)=\nu(\rho)$.

Substituting these into \eqref{eq:21}, we get
\begin{eqnarray}\label{eq:3}
\left(
1- \frac{1}{4 \rho^2} - \frac{|a|^2}{4 \rho^2} - \frac{a}{4 \rho^2} z -   \frac{\overline a}{4 \rho^2} \overline z \right)
\nu_*(\rho)
&+& \frac{aB}{4 \rho^2} \nu_{-1}
+ \frac{\overline a}{4 \rho^2} \overline z \nu(\rho)
\\ &=& \left(1 - \frac{|a|^2}{4 \rho^2} \nu(\rho) \right) \mu_*.  \nonumber
\end{eqnarray}
Now we see that the coefficient of $\nu_*(\rho)$ is $0$ (on the unit circle) if
 \[
\frac{az^2}{4 \rho^2} - z \left( 1- \frac{1}{4\rho^2}-\frac{|a|^2}{4 \rho^2} \right)
+ \frac {\overline a}{4 \rho^2} =0. 
\]
Let the roots be $z_1, z_2$, so that
  $z_1 z_2 =\frac{\overline a}{a}$ and 
\begin{equation}\label{eq:middleterms}
z_1+z_2 = 
 \frac{4\rho^2-1-|a|^2}{a}.
\end{equation}
Upon substituting these zeros into \eqref{eq:3}, we then have for $k = 1, 2$
\[
\frac{az_kB(z_k)}{4 \rho^2}  \nu_{-1} + \left(\frac{\overline a}{4 \rho^2}+ \frac{|a|^2}{4 \rho^2} z_k \mu_*(z_k) \right) \nu(\rho) 
=  z_k\mu_*(z_k).
\]

We have two unknowns, $\nu_{-1}$ and $\nu(\rho)$, and solving for the latter from these
two simultaneous equations gives
\begin{eqnarray*}
\nu(\rho)  \big( (\overline a + |a|^2 z_1 \mu_*(z_1))z_2B(z_2)
- (\overline a + |a|^2 z_2 \mu_*(z_2))z_1B(z_1)\big)\\
=  4\rho^2  (z_1\mu_*(z_1)z_2B(z_2)-
z_2 \mu_*(z_2) z_1 B(z_1)).
\end{eqnarray*}
We may now consider the case that 
$P_\rho$ is singular (and $\overline\rho =\rho > \rho_s$), which happens when
\[
\overline a(z_2B(z_2)-z_1B(z_1))+|a|^2 z_1z_2 (\mu_*(z_1)B(z_2)-\mu_*(z_2)B(z_1))=0.
\]
On using the facts that $z_1z_2=\overline a/a$ and $\mu_*(z)=1-B(z)\overline{B(0)}$, this simplifies to
\begin{equation}\label{eq:zBz}
z_2B(z_2)-z_1B(z_1) + \overline a (B(z_2)-B(z_1))=0.
\end{equation}

This is the key equation for determining $z_1$ and $z_2$, but
in the special case that  
$a$ is real 
and the zeros of $B$ are also real, we 
can go further, since we
have
$B(\overline z)=\overline{B(z)}$ and \eqref{eq:zBz} is simply
\[
\imag (z_1 B(z_1)) +  a \imag  B(z_1) = 0.
\]

As $a \to 0$, the norm of $(I + a S_B)/2 \to 1/2$, so we are interested in values of $\rho$ near $1/2$. Note that for $\rho = (1+\delta)/2$  
where $
\delta$ is small, and
$a$  real, we see from \eqref{eq:middleterms} that
\[
\re z_1=\re z_2 = \frac{(1+\delta)^2-1-a^2}{2 a}=\frac{2\delta + \delta^2-a^2}{2a}.
\]
The equation for $z_1= \overline{z_2}$ reduces to 
\[
\imag (z B(z))=
 -a \imag(B(z)),
\]
and as $a \to 0$, we have $\imag z B(z) \to 0$ so $z B(z) \to \pm 1$.\\

These formulae will be used extensively in the rest of this section.

\begin{remark}
The Foias--Tannenbaum approach can be used in principle to calculate the norm of more general
functions of $S_B$, namely, $R(S_B)$ for $R$ a rational function, rather than simply $R(z)=1+az$.
The details
are more complicated, but these ideas may shed some light on the Crouzeix conjecture.
\end{remark}

\subsection{Blaschke products with real zeros}
We now apply this to Blaschke products with distinct real zeros and end this section with several detailed examples. Theorem~\ref{thm:nrreal} allows us to obtain the numerical radius of $I + a S_B$ from the limit appearing in Theorem~\ref{thm:main}. 

\begin{thm}\label{thm:nrreal} Let $B$ be a Blaschke product with  distinct   zeros $z_1, \ldots, z_n \in \mathbb{R}$. Then $w(S_B)$ is attained on the real line. \label{thm:numradreal}
\end{thm}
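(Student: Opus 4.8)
The plan is to turn the numerical radius into a one‑parameter spectral maximisation and then to exploit the fact that, when the zeros are real, $S_B$ has a real matrix. Combining Theorem~\ref{thm:main} with the observation recalled just before it, namely $w(T)=\max_{0\le\theta<2\pi}\max\{\re\lambda:\lambda\in W(e^{i\theta}T)\}$, and using that $\max\{\langle Hv,v\rangle:\|v\|=1\}=\lambda_{\max}(H)$ for Hermitian $H$, one obtains (with $\re T:=\tfrac12(T+T^*)$)
\[
w(S_B)=\max_{0\le\theta<2\pi}\lambda_{\max}\bigl(\re(e^{i\theta}S_B)\bigr).
\]
Since the zeros $z_1,\dots,z_n$ are real, the matrix in \eqref{eqn:A} is real; write it $M=A+N$ with $A=\tfrac12(M+M^{T})$ (real symmetric) and $N=\tfrac12(M-M^{T})$ (real antisymmetric), so that $\re(e^{i\theta}M)=\cos\theta\,A+\sin\theta\,(iN)$ with $iN$ Hermitian. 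Moreover $W(S_B)$ is symmetric about $\mathbb{R}$ (as $\overline{\langle Mx,x\rangle}=\langle M\bar x,\bar x\rangle$), so by convexity $W(S_B)\cap\mathbb{R}=[\lambda_{\min}(A),\lambda_{\max}(A)]$. Hence the statement ``$w(S_B)$ is attained on the real line'' is exactly the equality $w(S_B)=\|A\|=\|\re S_B\|$.

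First I would note that $\|A\|\le w(S_B)$ is automatic, since $\|A\|=\max\{|\re\langle S_Bv,v\rangle|:\|v\|=1\}$, so the task is the reverse inequality $|\langle S_Bv,v\rangle|\le\|A\|$ for every unit $v$. Writing $v=p+iq$ with $p,q\in\mathbb{R}^{n}$, a short computation gives $\|v\|^{2}=\|p\|^{2}+\|q\|^{2}$ and $\langle S_Bv,v\rangle=\bigl(\langle Ap,p\rangle+\langle Aq,q\rangle\bigr)+2i\,p^{T}Nq$, so for $\|p\|^{2}+\|q\|^{2}=1$ the claim reduces to the scalar inequality
\[
\bigl(\langle Ap,p\rangle+\langle Aq,q\rangle\bigr)^{2}+4\,(p^{T}Nq)^{2}\ \le\ \|A\|^{2}.
\]
Since $|\langle Ap,p\rangle+\langle Aq,q\rangle|\le\|A\|$ always, this holds whenever the cross term $p^{T}Nq$ vanishes at an extremiser (in particular when $q=0$, the real‑line case), and the entire difficulty is to control the trade-off between $|\langle Ap,p\rangle+\langle Aq,q\rangle|$ and $|p^{T}Nq|$.

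That trade-off cannot be controlled for an arbitrary real matrix — a rotation block has $A=0$ but $p^{T}Nq\not\equiv0$ — so the argument must use that every zero is real, i.e.\ $\sigma(S_B)\subset\mathbb{R}$. The natural way to bring this in is through the Pick–matrix picture of Section~\ref{sec:norm}: since the $z_j$ are real, $S_B^*$ acts as the real diagonal matrix $\operatorname{diag}(z_j)$ on the reproducing kernels $k_{z_j}$, whose Gram matrix is the positive‑definite Pick matrix $P=[(1-z_jz_k)^{-1}]$, so $S_B$ is self-adjoint for a positive form and is similar to a real diagonal matrix by a positive similarity. Feeding this into the reduction of the previous paragraph — this is the heart of the proof — one closes the estimate via the Foias–Tannenbaum analysis: solving $\|I+ae^{i\theta}S_B\|=\dist\bigl((1+ae^{i\theta}z)/B,\,H^{\infty}\bigr)$ through \eqref{eq:pickmatrix}--\eqref{eq:zBz}, letting $a\to0+$ as in Theorem~\ref{thm:main}, and showing that the extremal pair $(z_1,z_2)$ produced by \eqref{eq:middleterms}--\eqref{eq:zBz} can be transported to $\theta\in\{0,\pi\}$ without decreasing the resulting limit; here one uses $B(\bar z)=\overline{B(z)}$, which for real $a$ and $z_1=\overline{z_2}$ turns \eqref{eq:zBz} into $\imag\bigl(z_1B(z_1)\bigr)+a\,\imag B(z_1)=0$, as displayed before the theorem. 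I expect this transport step to be the main obstacle — equivalently, proving the displayed scalar inequality in full generality — and the cancellation one expects to exploit, once the real‑spectrum hypothesis has actually been used, is of the type $|\alpha u+\beta u'|^{2}+\alpha\beta\,|u-u'|^{2}=(\alpha+\beta)^{2}$ for unit vectors $u,u'$.
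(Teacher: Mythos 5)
Your reduction is sound as far as it goes: with the real matrix of $S_B$ written as $A+N$ ($A$ symmetric, $N$ antisymmetric), the symmetry of $W(S_B)$ about $\mathbb{R}$ and convexity do show that the theorem is equivalent to $w(S_B)=\|A\|$, i.e.\ to the scalar inequality $\bigl(p^{T}Ap+q^{T}Aq\bigr)^{2}+4\,(p^{T}Nq)^{2}\le\|A\|^{2}$ for $\|p\|^{2}+\|q\|^{2}=1$. But that inequality \emph{is} the theorem, and your proposal never proves it. You correctly observe that it fails for general real matrices and must use the realness of the spectrum, yet the paragraph meant to supply the proof only lists ingredients (the Pick matrix, the positive similarity to $\operatorname{diag}(z_j)$, equations \eqref{eq:middleterms}--\eqref{eq:zBz}) and then defers the decisive step --- ``transporting the extremal pair to $\theta\in\{0,\pi\}$ without decreasing the limit'' --- which you yourself flag as the main obstacle. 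The closing identity $|\alpha u+\beta u'|^{2}+\alpha\beta|u-u'|^{2}=(\alpha+\beta)^{2}$ is a heuristic, not an argument. So there is a genuine gap precisely where the hypothesis of real zeros has to do its work; nothing in the write-up closes it.

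For the record, the paper closes this gap not by matrix analysis nor by rerunning Foias--Tannenbaum at general $\theta$, but by a conformal-mapping trick at the level of the interpolation problem. Via Theorem~\ref{thm:main} one must show $\|(1+te^{i\theta}z)+BH^\infty\|\le\|(1+tz)+BH^\infty\|+o(t)$ as $t\to0+$. If $f$ interpolates $f(z_k)=1+tz_k$ with $\|f\|_\infty\le\gamma=1+ct+o(t)$, one composes it with an automorphism $\psi$ of $\gamma\DD$ fixing $1$ with $\psi'(1)=e^{i\theta}$; since the data $1+\lambda t$ lie on a real segment, $\psi(1+\lambda t)=1+e^{i\theta}\lambda t+O(t^{2})$, so $h=\psi\circ f$ solves the rotated problem after an $O(t^{2})$ correction with the same norm bound. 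The correction is harmless because $c>b$ (eigenvalues cannot lie on $\partial W(S_B)$, shown via the $2\times2$ elliptical-range argument), so the interpolation values stay in the interior of $\gamma\DD$. This is the idea your proposal is missing; if you prefer to keep your Hermitian/antisymmetric decomposition, you would need to actually establish the displayed scalar inequality, which remains unproved.
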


\begin{proof} Suppose that the zeros, $z_1, \ldots, z_n$ with $n \ge 2$ are real and lie in the line segment $[a, b]$. Rotating the line segment, if necessary, we may assume that $|a| \le b$. From the discussion above we know that for $t > 0$
\[
\|I + t e^{i \theta} S_B\| = \|(1 + t e^{i\theta} z) + B H^\infty \|.
\]
We claim that it is enough to show that if we can solve the interpolation problem $f(z_k) = 1 + t z_k$, for small
$t > 0$,   and $\|f\|_\infty \le \gamma$, where 
\[
\gamma = \gamma(t) = 1 + ct + o(t),
\]
 then we can solve $h(z_k) = 1 + t e^{i \theta} z_k$ with $\|h\|_\infty \le \gamma + o(t)$. If this is the case then we know that for $e^{i \theta}$ with $0 < \theta < 2\pi$, 
 \[\|(1 + t e^{i \theta} z) + B H^\infty\| \le \|h + (1 + t e^{i \theta} z - h) + B H^\infty \| \le \|h\| \le \gamma + o(t).\] Since we take $t \to 0$, this together with Theorem~\ref{thm:main} yields the result.

We first note that $S_B$ can have no eigenvalues on the boundary of $W(S_B)$. 
Though this is well known, see \cite{DON}, in this special case, we can give a short independent proof: If $z_j$ is an arbitrary zero of $B$, we can reorder the zeros so that $z_j$ and $z_k$, with $j \ne k$, are the first two zeros. 
The upper-left $2 \times 2$ corner of the matrix representing $S_B$ (see \eqref{eqn:A}) is then upper triangular and the $(1, 2)$ entry is $m: = \sqrt{1 - |z_j|^2} \sqrt{1 - |z_k|^2} > 0$. 
The numerical range of this $2 \times 2$ block, which is an elliptical disk with foci at $z_j$ and $z_k$ and minor axis $m$, is contained in $W(S_B)$. 
Since the minor axis has nonzero length, the elliptical disk is nondegenerate and the foci cannot lie on the boundary. Therefore $c > b$.

Choose an automorphism $\psi$ of $\gamma \mathbb{D}$ with $\psi(1) = 1$ that takes the point $1 + \lambda t$ to $1 + e^{i \theta} \lambda t + O(t^2)$ for $\lambda \in [a, b]$ and all small $t$. (In fact, the image of this interval lies on a circular arc that meets $\gamma \mathbb{T}$ twice at right angles.) By the chain rule, the derivative of $\psi$ at the fixed point is the unimodular constant $e^{i \theta}$. Therefore, 
\[\psi(z) = \psi(1) + \psi^\prime(1)(z - 1) + O(t^2) = 1 + e^{i \theta} (z-1) + O(t^2).\]
Thus, for $\lambda \in [a, b]$,
\[\psi(1 + \lambda t) = 1 + e^{i \theta} \lambda t + O(t^2).\]

Now consider $h := \psi \circ f$. Then $\|h\| \le \gamma + o(t)$ and $h(1 + t z_k) = 1 + e^{i \theta} t z_k + O(t^2)$. By adding on another analytic function of norm $O(t^2)$, where the bound depends on $n$ but not on $t$, we can solve the exact interpolation problem. 
\end{proof}

If the zeros of $B$ are not distinct, we may approximate $B$ uniformly by Blaschke products $B_n$ of the same degree with distinct zeros. In this case, the numerical radii of $S_{B_n}$ converge to the numerical radius of $S_B$ and we obtain the result for general finite Blaschke products.  

\subsection{Degree-$2$ Blaschke products}

We may now use Theorem~\ref{thm:numradreal} and the algorithm presented in Section~\ref{sec:norm} to compute the numerical radius of compressions of the shift operator with Blaschke products with real zeros. We begin with a simple example. 

\begin{exam}
If $B$ is a degree-$2$ Blaschke product with real zeros, $a_1$ and $a_2$, then the numerical radius of $S_B$ is
\begin{equation}\label{eqn:deg2}
\max \left\{\left|\frac{a_1 + a_2 - a_1 a_2 + 1}{2}\right|, \left|\frac{a_1 + a_2 + a_1 a_2 - 1}{2}\right|\right\}.
\end{equation} 
\end{exam}

We are looking for the points at which $z B(z) = \pm 1$. In general,
\[z^3 - (a_1 + a_2) z^2 + a_1 a_2 z = \pm (1 - (a_1+ a_2)z + a_1 a_2 z^2).\]
If the zeros are real and we let $z_1, z_2, z_3$ denote the three solutions to this equation, in one case we have
\[z_1 = -1~\mbox{and}~\re z_2 = \re z_3 = \frac{a_1 + a_2 + 1 - a_1 a_2}{2}\] and in the other case we have
\[z_1 = 1~\mbox{and}~\re z_2 = \re z_3 = \frac{a_1 + a_2 - 1 + a_1 a_2}{2},\] which yields \eqref{eqn:deg2}. 

We can check this result: In the event that the zeros of $B$ are both real, it follows from the elliptical range theorem that the numerical range is an elliptical disk with foci at $a_1$ and $a_2$ and major axis of length $|1 - \overline{a_1} a_2| = |1 - a_1 a_2|$, in this case. Thus, we obtain the correct result.\\

In particular, consider
\[
B(z) = z \left( \frac{z- \frac12}{1-\frac{z}2}\right).
\]
We find the three solutions to $z B(z) = 1$ are $-1, 3/4 - i \sqrt{7}/4$ and $3/4 + i \sqrt{7}/4$, and the solutions to $z B(z) = -1$ are $1, -1/4 + i \sqrt{15}/4, -1/4 - i \sqrt{15}/4$. 
Thus we see that $w(S_B) = 3/4$, and indeed one may verify that
% $\delta/2 \to 3/4$ and so that
\[
\|1+t S_B\|=1+\frac34 t + o(t),
\]
  as  $t \to 0+$. In fact, we know that in this case the numerical radius of $S_B$  is the maximum real value of the ellipse with foci $0$ and $1/2$ and major axis of length $1$: this is the center of the ellipse plus the length of the semi-major axis, which is $3/4$.\\

Now, there is an alternative way of calculating $\|1+tS_B\|$, namely, to find the largest $\gamma$ that makes the Pick matrix singular, see \cite[Section 2.1]{P97}.
Again we take $z_1=0$, $z_2=1/2$, for which we have seen that the numerical radius is $3/4$. Take $t>0$.

The Pick matrix, as given in \eqref{eq:pickmatrix}, is
\[
\begin{pmatrix}
1-1/\gamma^2 &  1-(1+t/2)/\gamma^2 \\
1-(1+t/2)/\gamma^2 & \frac{1-(1+t/2)^2/\gamma^2}{3/4}
\end{pmatrix}
\]
and is singular if
\[
\frac43(\gamma^2-1)(\gamma^2-x^2)-(\gamma^2-x)^2=0,
\]
where $x=1+t/2$.
This gives
\[
\gamma^4+\gamma^2(-4+6x-4x^2)+x^2=0.
\]
So
\[
\gamma^2= \frac{2+t+t^2 \pm \sqrt{(2+t+t^2)^2-4(1+t+t^2/4)}}{2} = (1+ t/2 ) \pm t + O(t^2),
\]
and $\|1+tS_B\|= 1+ 3t/4 + O(t^2)$, as expected.

\begin{exam}
A second simple example is the Blaschke product $B(z)=z^n$. In this case, $S_B$ is represented by a nilpotent Jordan block of size $n$ with zeros on the diagonal. The numerical
radius is $\cos (\pi/(n+1))$, see for example, \cite{HH92}.\end{exam}

To show this without referring to known results, we  solve $z B(z) = z^{n+1} = \pm 1$ and take the two values with the largest and smallest real parts. 
The cases of $n$ even and $n$ odd need to be considered separately, but
we obtain the two sets of points with real part $\cos(\pi/(n+1))$ and $-\cos(\pi/(n+1))$, establishing the result.  Since all zeros are real no matter how we rotate, we see that the numerical range is the closed disk of radius $\cos (\pi/(n+1))$.

\subsection{Degree-$3$ Blaschke products}\label{sec:deg3}

Very little is known about the numerical radius of $S_B$ for high degree Blaschke products, but for low degrees (namely degree $3$ and degree $4$) we can often compute $w(S_B)$ explicitly. 
 Gaaya \cite{HG2010} analyzed the numerical radius of compressions of the shift operator in the particular case that $B(z) = \left(\frac{z - a}{1 - \overline{a}z}\right)^n$. Our techniques also apply to such Blaschke products, since we may assume that the zero $a$ is real. In this section, we analyze the numerical radius of all $S_B$ with $B$ degree $3$ having real zeros.

\begin{exam} \label{ex:deg3}
We now compute the numerical radius of $S_B$ when $B$ is a Blaschke product of degree $3$ with real zeros $a$, $b$, and $c$. In fact, letting $\alpha = a + b + c + abc$ and $\beta = ab + ac + bc$, we see that $w(S_B)$ is \[ \max\left\{\left|\frac{\alpha + \sqrt{\alpha^2 - 8(\beta -1)}}{4}\right|,
\left|\frac{\alpha - \sqrt{\alpha^2 - 8(\beta - 1)}}{4}\right|\right\}.\]
\end{exam}

\begin{proof} This can be handled in the same way as the degree-$2$ case.  First we solve $z B(z) = \pm 1$. The equation we need is $z B(z) = -1$, because the other has $1$ and $-1$ as a root and therefore the tangent lines do not give the maximum real part. So we solve
\begin{equation}\label{eqn:deg3}
z(z-a)(z-b)(z-c) +(1-az)(1-bz)(1-cz) = 0.
\end{equation}
The left-hand side is
\begin{multline*} 1 - (a + b + c + abc)z +2(ab+ac+bc)z^2\\-(a+b+c+abc)z^3+z^4 = 1 - \alpha z + 2 \beta z^2 - \alpha z^3 + z^4.
\end{multline*} 
Since all coefficients are real the roots must occur in conjugate pairs, the roots must be of the form $x \pm i y$ and $u \pm iv$ with $x, y, u, v$ real and $x^2 + y^2 = u^2 + v^2 = 1$.  Thus,  \eqref{eqn:deg3} becomes
\[
     (z^2-2xz+(x^2+y^2))(z^2-2uz+(u^2+v^2)) = 0,
     \] 
     and therefore
     \[2(u+x) = a + b + c + abc \qquad\mbox{and}\qquad ab+ac+bc = 1 + 2ux.\]
So,
\[2(ux+x^2) = (a+b+c+abc)x\] and thus
\begin{equation}\label{eqn:8.5}
2x^2 - (a+b+c+abc)x +(ab+ac+bc-1) = 0.\end{equation} Now we can solve the general problem for a degree-$3$ Blaschke product with all zeros real:
\begin{equation}\label{eqn:realpart}
x = \frac{(a + b + c + abc) \pm \sqrt{(a+b+c+abc)^2 - 8(ab+ac+bc-1)}}{4}.
\end{equation}
The maximum modulus of the two values of $x$ is the numerical radius of $S_B$.\end{proof}

We work one particular case in detail. Let $B$ have real zeros at $0$, $a$ and $b$. Then our computations require us to solve $z B(z) = \pm 1$. From \eqref{eqn:realpart} we see that the real parts of the roots satisfy 
     \[
     r^2-\frac{(a+b)r}{2}+\frac{ab-1}{2}=0,
     \] or
     \[
     r= \frac{a+b \pm \sqrt{(a+b)^2-8(ab-1) }}{4}.
     \]
 By Theorem~\ref{thm:nrreal} the numerical range is attained on the real axis. Using the algorithm from Section~\ref{sec:FT}, we know the maximum will occur at
 \begin{equation}
 \label{eqn:8.5}
 \max\left\{\left|\frac{a+b + \sqrt{(a+b)^2-8(ab-1) }}{4}\right|, \left|\frac{a+b - \sqrt{(a+b)^2-8(ab-1) }}{4}\right|\right\}.
 \end{equation}
 We can check that this is correct in particular cases, one of which we do below.

If $a = b$, we would like to check that the numerical radius of $S_B$ \begin{equation}\label{eqn:ma}
w(S_B) = \max\left\{\left|\frac{a}{2} + \frac{\sqrt{2 - a^2}}{2}\right|, \left|\frac{a}{2} - \frac{\sqrt{2 - a^2}}{2}\right|\right\},\end{equation}
where 
\[B(z) = z \left(\frac{z - a}{1 - \overline{a} z}\right)^2.\] 

In this case, \[z B(z) = z^2 \left(\frac{z-a}{1 - \overline{a} z}\right)^2\] is a composition of two degree-2 Blaschke products. By \cite[Theorem 3.6]{GW2016} (see also \cite{F}) $W(S_B)$ is an elliptical disk and the foci are chosen from among the zeros of $B$. Writing $zB(z) = C(D(z))$ with $C(0) = D(0) = 0$, the zero of $D(z)/z$ is the one that is not a focus. Therefore, we see that of the three zeros $0, a$, and $a$ of $B$, the foci must be $0$ and $a$. %of two degree-$2$ Blaschke products, and therefore  $W(S_B)$ is elliptical with foci at $0$ and $.3$, see \cite{F, GW2016}. (Writing $z B(z) = C(D(z))$ with $C(0) = D(0) = 0$, the zero of $D(z)/z$ is the one that is not a focus.) 
From  \eqref{eqn:A} the matrix representation for $S_B$ as

\[\begin{bmatrix}
a & \sqrt{1 - |a|^2}\sqrt{1 - |b|^2} & -\overline{b}\sqrt{1 - |a|^2}\sqrt{1 - |c|^2}\\
0 & b & \sqrt{1 - |b|^2}\sqrt{1 - |c|^2}\\
0 & 0 & c
\end{bmatrix}\]
with $a = b$ and $c = 0$. It follows from the main theorem of \cite{KRS1997}  that the length of the minor axis is
\[\left(tr(A^\star A) - |a|^2 - |b|^2 - |c|^2\right)^{1/2},\] which is $\left(2(1 - a^2)\right)^{1/2}$ in this case.
So the length of the minor axis is $\sqrt{2(1 - a^2)}$. The center of the ellipse is $a/2$ and the foci are $0$ and $a$, so the major axis has length $\sqrt{2 - a^2}$. This agrees with \eqref{eqn:ma}.

%Let us now check this in a particular situation.

%Let $a = 0.3$. Then we are considering the maximum of 
%\[\left|\frac{a \pm \sqrt{2 - a^2}}{2}\right|,\] when $a = 0.3$ and that should be 
%\[\frac{0.3 + \sqrt{2 - 0.3^2}}{2} \approx .84104.\]
%The solutions of $zB(z) = -1$ are
%\[-0.541014 \pm 0.841014 i,  
%   0.841014 \pm 0.541014 i,\]
 %  and the solutions to $z B(z) = +1$ are
%\[\pm 1, \re{w}_1 = \re{w}_2 = 0.3.\]

\subsection{Degree-$4$ Blaschke products}

There is some work in \cite{Gau06}  on numerical ranges of $4 \times 4$ matrices, although
only in the case of an elliptical disk. The computations are extremely complicated, but
here we may take a more transparent approach.

We consider a Blaschke product with real zeros $a, b, c, d$, where things are necessarily more complicated. Once again, we must solve
\[z (-a + z) (-b + z) (-c + z) (-d + z) \pm  (1 - a z) (1 - b z) (1 - 
    c z) (1 - d z) = 0.\]
For the case    
\[z (-a + z) (-b + z) (-c + z) (-d + z) -  (1 - a z) (1 - b z) (1 - 
    c z) (1 - d z) = 0,\]    
a computation shows that this is equivalent to   
\begin{multline*}(-1 + z) (1 - (-1 + a + b + c + d + a b c d) z + ((-1 + c) (-1 + d) + 
      b (-1 + c + d + c d) +\\ a (-1 + c + d + c d + b (1 + c + d - c d))) z^2 - (-1 + a + b + 
      c + d + a b c d) z^3 + z^4\huge) = 0.\end{multline*} 
 For
 \[z (-a + z) (-b + z) (-c + z) (-d + z) + (1 - a z) (1 - b z) (1 - 
    c z) (1 - d z) = 0,\]
 a computation shows that this is equivalent to
\begin{multline*}(1 + z) (1 - (1 + b + c + d - a (-1 + b c d)) z + ((1 + c) (1 + d) + 
      b (1 + c + d - c d) + \\
      a (1 + c + d - c d - b (-1 + c + d + c d))) z^2 - (1 + b + c + 
      d - a (-1 + b c d)) z^3 + z^4) = 0.\end{multline*}
          
Noting that the techniques used in the solution in Section~\ref{sec:deg3} can be applied here, we see that if we let $\alpha$ denote the negative of the coefficient of $z$ and $\beta$ half the coefficient of $z^2$ we obtain:
\[(1 + z) (1 -\alpha z +  2 \beta z^2 - \alpha z^3 + z^4) = 0.\] By the techniques in the previous section
\begin{equation}\label{eqn:realpart2}
x = \frac{\alpha \pm \sqrt{\alpha^2 - 8(\beta - 1)}}{4}.
\end{equation}
  
There is also a standard procedure to solve a quartic (see \url{http://www.sosmath.com/algebra/factor/fac12/fac12.html} for a complete description) and in this case we obtain exact solutions using this process (or Mathematica).

%\begin{remark} No general formula for solutions to $zB(z)= \pm 1$ seems to be known, but if $B$ has real coefficients, and $\deg B \le 4$, then an easy analytic solution is available:

%\begin{itemize}
%\item If $\deg B=2$, then $zB(z)=1$ has a root $z=1$, and the other two roots (a conjugate pair) satisfy a quadratic equation;  for $zB(z)=-1$ we have a root $z=-1$, and again a quadratic for the other roots.
%\item If $\deg B=3$, then $zB(z)=1$ has roots $z=\pm 1$, and the other two roots satisfy a quadratic equation. For $zB(z)=-1$ the roots have the form $x \pm iy$ and $u \pm iv$, and can be found as in Example \ref{ex:deg3} below.
%\item  If $\deg B=4$, then $zB(z)=1$ has roots $z=1$ and two pairs of the form $x \pm iy$, $u \pm iv$, which may also be found by solving a quadratic; similarly for $zB(z)=-1$, with roots $z=-1$ and two more conjugate pairs.
%\end{itemize}

%\end{remark}

\section{Norms of Truncated Toeplitz Operators}
\label{TTO}

In this section, we look at more general truncated Toeplitz operators.

Recall that for $u$ an inner function $P_u$ denotes the orthogonal projection of $L^2$ onto $K_u$. For $\varphi \in L^2$ the truncated Toeplitz operator 
\[A_\varphi^u f := P_u(\varphi f), \qquad f \in K_u\] is densely defined on $K_u^\infty : = H^\infty \cap K_u$. When $\varphi(z) = z$, we have the compression of the shift operator, $S_u$.

 In \cite[Corollary 2]{GR2010}, the authors provide a general lower bound for $\|A_\varphi^u\|$ for $\varphi \in L^2$ and obtain the following as a corollary: If $u$ is an inner function with zeros accumulating at every point of $\mathbb{T}$ and $\varphi$ is a continuous function on $\mathbb{T}$, then $\|A_\varphi^u\| = \|\varphi\|_\infty$. The authors note that the hypothesis can be weakened to the following: 
 
 \begin{prop}\label{GR} Let $u$ be an inner function that is not a finite Blaschke product. Let $\xi$ be a limit point of the zeros of $u$.  If $\varphi \in L^\infty$ is continuous on an open arc containing $\xi$ with $|\varphi(\xi)| = \|\varphi\|_\infty$, then $\|A_\varphi^u\| = \|\varphi\|$. \end{prop}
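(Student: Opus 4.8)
The plan is to construct, for each $\varepsilon>0$, a unit vector $f\in K_u$ with $\|A_\varphi^u f\|\ge \|\varphi\|_\infty-\varepsilon$; combined with the trivial bound $\|A_\varphi^u\|\le\|P_u M_\varphi|_{K_u}\|\le\|\varphi\|_\infty$, this gives equality. The natural candidates for near-extremal vectors are normalized reproducing kernels $k_a^u/\|k_a^u\|$ where $a$ is a zero of $u$ lying near $\xi$; since $\xi$ is a limit point of the zeros of $u$, such $a$ can be taken arbitrarily close to $\xi$. The key point is that these kernels concentrate (in a suitable weak sense) near the boundary point $\xi$, so that multiplication by $\varphi$ acts essentially like multiplication by the scalar $\varphi(\xi)$, whose modulus is $\|\varphi\|_\infty$.

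First I would make the concentration precise. For $a\in\DD$ a zero of $u$, the kernel $k_a^u = k_a$ (the ordinary Szegő kernel) lies in $K_u$, and $\langle A_\varphi^u k_a, k_a\rangle = \langle \varphi k_a, k_a\rangle$ since $P_u k_a = k_a$; dividing by $\|k_a\|^2 = (1-|a|^2)^{-1}$ this equals $(1-|a|^2)\int_{\TT}\varphi(\zeta)\,|k_a(\zeta)|^2\,dm(\zeta)$, i.e.\ the Poisson integral of $\varphi$ evaluated at $a$. As $a\to\xi$ nontangentially through the zeros of $u$, and using that $\varphi$ is continuous on an arc around $\xi$ (so the Poisson integral has nontangential limit $\varphi(\xi)$ there), this quantity tends to $\varphi(\xi)$. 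Hence $\|A_\varphi^u\|\ge w(A_\varphi^u)\ge \limsup_{a\to\xi}|\langle A_\varphi^u \hat k_a,\hat k_a\rangle| = |\varphi(\xi)| = \|\varphi\|_\infty$, where $\hat k_a = k_a/\|k_a\|$. Actually this already finishes the proof using only the numerical radius, since $w\le\|\cdot\|$; one does not even need a norm-attainment argument.

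The one point requiring care — and the main obstacle — is justifying that the Poisson integral of $\varphi$ at $a$ converges to $\varphi(\xi)$ as $a$ ranges over the zeros of $u$ approaching $\xi$. The zeros need not approach $\xi$ nontangentially a priori, so I would instead argue directly: split $\varphi = \varphi(\xi) + (\varphi-\varphi(\xi))$, and on the arc $I$ around $\xi$ the second term is small near $\xi$ while off $I$ it is bounded; the Poisson kernel $P_a(\zeta) = (1-|a|^2)|1-\bar a\zeta|^{-2}$ has total mass $1$, concentrates near $\xi$ as $a\to\xi$ (for \emph{any} approach, tangential or not, because $|1-\bar a\zeta|\ge$ const $>0$ when $\zeta$ stays away from $\xi$ and $a\to\xi$), so $\int P_a(\varphi-\varphi(\xi))\,dm\to 0$. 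This gives the required estimate. A cosmetic alternative, matching the phrasing "$\|A_\varphi^u\|=\|\varphi\|$" if $\|\varphi\|$ is meant to include norm-attainment, is to note that $\varphi(\xi)$ being an extreme value of $|\varphi|$ on $\TT$ forces the supremum in $W(A_\varphi^u)$ to be achieved in the limit, but no further work is needed for the stated equality.

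Finally I would remark that the inequality $\|A_\varphi^u\|\ge\|\varphi\|_\infty$ here is exactly the content of \cite[Corollary 2]{GR2010} applied at the single boundary point $\xi$, so the novelty is only in observing that one limit point of the zeros suffices rather than zeros accumulating everywhere; the reverse inequality $\|A_\varphi^u\|\le\|\varphi\|_\infty$ is immediate from $\|A_\varphi^u f\| = \|P_u(\varphi f)\|\le\|\varphi f\|\le\|\varphi\|_\infty\|f\|$.
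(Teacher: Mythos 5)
Your argument is correct, and it takes a more elementary and self-contained route than the paper. The paper does not prove Proposition~4.1 directly: it quotes Corollary~1 of Garcia--Ross for the lower bound $\sup\{|\hat\varphi(\lambda)|:\lambda\in\mathbb{D},\ u(\lambda)=0\}\le\|A_\varphi^u\|$ (whose proof is precisely your normalized reproducing-kernel identity $\langle A_\varphi^u \hat k_a,\hat k_a\rangle=$ Poisson extension of $\varphi$ at $a$, valid because $k_a\in K_u$ at a zero $a$ of $u$), and then works in the maximal ideal space: it chooses $x\in M(H^\infty+C(\mathbb{T}))$ in the closure of the zeros of $u$ with $|\hat\varphi(x)|=\|\varphi\|_\infty$, invokes continuity of the Gelfand transform of an $L^\infty$ function on $M(H^\infty)$ to get $|\hat\varphi(x)|\le\sup_{u(\lambda)=0}|\hat\varphi(\lambda)|$, and thereby realizes Proposition~4.1 as a special case of its Proposition~4.2 (which requires no continuity of $\varphi$ at all and is proved with representing measures and support sets). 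You replace that abstract step with the concrete estimate that the Poisson integral of $\varphi$ at points $a\to\xi$ through the zeros converges to $\varphi(\xi)$, with no nontangentiality needed since the Poisson kernel $P_a$ tends to $0$ uniformly off any neighbourhood of $\xi$ for an arbitrary approach $a\to\xi$; you correctly flag this as the only delicate point and your splitting of $\varphi$ into $\varphi(\xi)$ plus a remainder handles it. In short, your proof buys elementarity -- no corona theorem, no $M(H^\infty)$, no external citation beyond the trivial bound $\|A_\varphi^u\|\le\|\varphi\|_\infty$ -- while the paper's route buys the strictly more general Proposition~4.2, of which 4.1 is then a corollary. Both arguments ultimately rest on the same lower bound evaluated at the zeros of $u$ near $\xi$.
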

 
 We present a short proof of their general result in Proposition~\ref{GR2} and show how these same techniques allow the result to be generalized.
 
Recall that $C(\mathbb{T})$ denotes the algebra of continuous functions on the unit circle. We let $M(H^\infty)$ denote the maximal ideal space of $H^\infty$, or set of nonzero multiplicative linear functionals with the weak-$\star$ topology. Then, identifying a point $z \in \mathbb{D}$ with the linear functional that is evaluation at that  point, we may think of $\mathbb{D}$ as contained in $M(H^\infty)$. Carleson's corona theorem says that $\mathbb{D}$ is dense in $M(H^\infty)$. Let \[M(H^\infty + C(\mathbb{T})) = M(H^\infty) \setminus \mathbb{D}\] be the maximal ideal space of the (closed) algebra $H^\infty + C(\mathbb{T})$.  

Let $Z(u)$ denote the zeros of an inner function $u$ in $M(H^\infty)$ and $Z_\mathbb{D}(u)$ the zeros of $u$ in $\mathbb{D}$. If the function $f \notin H^\infty$, we write $f(z)$ for the Poisson extension of $f$ evaluated at a point $z$. Note that the proposition below does not require $u$ to have zeros in $\mathbb{D}$ and can thus be applied readily to inner functions with a nontrivial singular inner factor.

 If $u$ is a bounded harmonic function and $v$ the harmonic conjugate of $u$ and we let $h = e^{u + iv}$, then $u$ extends to a continuous function on $M(H^\infty)$ defined by $\hat{u} = \log|\hat{h}|$, see \cite[Lemma 4.4]{HOFFMAN}. Thus, for $f \in L^\infty$ we see that $\hat{f}$ is continuous on $M(H^\infty)$. There is another way to look at this: Recall that the maximal ideal space of $L^\infty$ is the Shilov boundary of $M(H^\infty)$. Then for $f \in L^\infty$ and $x \in M(H^\infty)$ we have
 \[x(f) = \hat{f}(x) = \int_{\supp\mu_x} \hat{f} d\mu_x,\] where $\hat{f}$ denotes the Gelfand transform of $f$ and $\supp \mu_x$ denotes the subset of $M(L^\infty)$ that is the {\it support set} for the representing measure $\mu_x$ of $x$. It is common to write $f$ in place of $\hat{f}$. In this way, we may think of the Gelfand transform of $f$ as a continuous function on $M(H^\infty)$, see \cite[p. 184]{HOFFMANBOOK}. The support set of $x \in M(H^\infty)$ is known to be a weak peak set for $H^\infty$ (see \cite[p. 207]{HOFFMANBOOK}).

\begin{prop}\label{GR2} Let $u$ be an inner function not invertible in $H^\infty + C(\mathbb{T})$. For $f \in L^\infty$, if $\hat{f}(x) = \|f\|_\infty$ for some $x \in Z(u)$, then
\[\dist(f, uH^\infty) = \|f\|_\infty.\]
\end{prop}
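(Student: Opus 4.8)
The plan is to split the claim into its two inequalities. The bound $\dist(f,uH^\infty)\le\|f\|_\infty$ is immediate, since $0\in uH^\infty$. So the entire content is the reverse inequality $\dist(f,uH^\infty)\ge\|f\|_\infty$, and the idea is simply to detect the size of $f-ug$ by evaluating at the point $x\in Z(u)$ at which $\hat f$ attains its maximum modulus.

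Concretely, fix $x\in Z(u)$ with $\hat f(x)=\|f\|_\infty$ and let $\mu_x$ be the representing measure of $x$, a probability measure on the Shilov boundary $M(L^\infty)$. From the machinery recalled before the statement: $\hat\varphi(x)=\int\hat\varphi\,d\mu_x$ for every $\varphi\in L^\infty$, this integral equals $x(\varphi)$ when $\varphi\in H^\infty$, and the Gelfand transform is an isometric $*$-isomorphism of $L^\infty$ onto $C(M(L^\infty))$. The last two facts yield the contraction estimate $|\hat\varphi(x)|=\bigl|\int\hat\varphi\,d\mu_x\bigr|\le\|\hat\varphi\|_\infty=\|\varphi\|_\infty$ for all $\varphi\in L^\infty$. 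Now take an arbitrary $g\in H^\infty$ and apply this to $\varphi=f-ug$: using linearity of $\varphi\mapsto\hat\varphi(x)$, the fact that $ug\in H^\infty$, and multiplicativity of the character $x$ on $H^\infty$,
\[
\|f-ug\|_\infty\;\ge\;\bigl|\widehat{(f-ug)}(x)\bigr|\;=\;\bigl|\hat f(x)-x(ug)\bigr|\;=\;\bigl|\hat f(x)-\hat u(x)\,\hat g(x)\bigr|\;=\;|\hat f(x)|\;=\;\|f\|_\infty ,
\]
since $\hat u(x)=0$. Taking the infimum over $g\in H^\infty$ gives $\dist(f,uH^\infty)\ge\|f\|_\infty$, which together with the trivial bound finishes the proof.

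I do not expect a genuine obstacle here; the only point that must be invoked with care is precisely the representing-measure machinery recalled just before the statement, namely that $x$ is given by a positive unit-mass measure on $M(L^\infty)$ and that $L^\infty\cong C(M(L^\infty))$ isometrically — this is what legitimizes $|\widehat{(f-ug)}(x)|\le\|f-ug\|_\infty$ for the $L^\infty$ function $f-ug$ and not merely for functions in $H^\infty$. It is worth noting where the hypothesis that $u$ is not invertible in $H^\infty+C(\mathbb{T})$ enters: the argument above works verbatim for \emph{any} $x\in M(H^\infty)$ with $\hat u(x)=0$, but if such an $x$ lies in $\mathbb{D}$ then $\hat f(x)=\|f\|_\infty$ forces $f$ to be a unimodular constant times $\|f\|_\infty$ (a degenerate case), so the substantive content of the proposition requires a zero of $u$ in $M(H^\infty)\setminus\mathbb{D}=M(H^\infty+C(\mathbb{T}))$, i.e. exactly the failure of invertibility of $u$ in $H^\infty+C(\mathbb{T})$.
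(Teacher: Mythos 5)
Your proof is correct and follows essentially the same route as the paper's: the trivial upper bound plus the lower bound obtained by evaluating at $x$ via its representing measure $\mu_x$, using that $\mu_x$ is a probability measure on $M(L^\infty)\cong$ the Shilov boundary and that the extended functional is linear on $L^\infty$ and annihilates $uH^\infty$ because $x(u)=0$. You merely make the contraction estimate $|\widehat{(f-ug)}(x)|\le\|f-ug\|_\infty$ explicit where the paper asserts the conclusion directly (after noting that $f$ is constant on $\supp\mu_x$), and you treat the case $x\in\mathbb{D}$ as a degenerate remark where the paper handles it as a separate first case; these are presentational differences only.
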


\begin{proof}  
If $x \in \mathbb{D}$, then $x$ is a zero of $u$ and it is clear that 
\[\dist(f, uH^\infty) \ge |f(x)| = \|f\|_\infty.\] Since $\dist(f, uH^\infty) \le \|f\|_\infty$ the result holds. So we may suppose that $x \in M(H^\infty) \setminus \mathbb{D}$ and $x(f) = \|f\|_\infty$.  Since $d\mu_x$ is a probability measure, $f$ must be constant on the support set of $x$ and that constant must be $\|f\|_\infty$. Therefore,

\[\dist(f, uH^\infty) \ge |f(x)| = \|f\|_\infty,\]
which completes the proof. 
\end{proof}

 Note that the assumption in Proposition~\ref{GR} implies the assumption in Proposition~\ref{GR2}: In Proposition ~\ref{GR} we have $\varphi \in L^\infty$ continuous on an open arc about $\xi$ for which $|\varphi(\xi)| = \|\varphi\|_\infty$ and $\xi$ is a limit point of the zeros of $u$, so we may choose $x \in M(H^\infty + C)$ in the closure of the zeros of $u$ with $\hat{\varphi}(x) = \|\varphi\|_\infty$.

To see that Proposition 4.2 extends Proposition 4.1, we use Corollary 1 of Garcia and Ross's paper \cite{GR2010} to note that if $\varphi \in L^2$, then 
\[\sup_{\{\lambda \in \mathbb{D}: u(\lambda) = 0\}} |\hat\varphi(\lambda)| \le \|A_\varphi^u\|.\] Therefore, if $\varphi \in L^\infty$ and $x \in M(H^\infty + C)$ with $x$ in the closure of the zeros of $u$, and $|\hat{\varphi}(x)| = \|\varphi\|_\infty$, then
\[\|\varphi\|_\infty = |\hat{\varphi}(x)| = \sup_{\{\lambda \in \mathbb{D}: u(\lambda) = 0\}} |\hat\varphi(\lambda)| \le \|A_\varphi^u\| \le \|\varphi\|_\infty.\]

%Similarly,   if $\xi$ is a limit point of a sequence along which $u$ tends to zero, $f \in L^\infty$ is continuous on an open arc $I$ containing $\xi$ and $|f(\xi)| = \|f\|_\infty$, we also obtain the result from Proposition~\ref{GR2}.\\ \color{black}
%\color{blue} We had the last paragraph here, but the Garcia Ross estimate only applies when we have $u(\lambda) = 0$. If it's just approaching zero, there's more work to be done. I wonder if we should just cut this last paragraph? \color{black}

We now consider the so-called thin interpolating Blaschke products (defined below). While one can follow the procedure below to obtain estimates for other interpolating Blasch\-ke products, the estimates will not be as good. In any event, this gives us information about the essential norm of a Hankel operator.

Recall that a Blaschke product $B$ is {\it interpolating} if the zero sequence $(z_n)$ of $B$ is an interpolating sequence for $H^\infty$; that is, given a bounded sequence of complex numbers, $(w_n)$, there exists $f \in H^\infty$ with $f(z_n) = w_n$ for all $n$. Carleson showed that this is equivalent to the existence of $\delta > 0$ such that
\[\inf_n\prod_{m \ne n} \left|\frac{z_m - z_n}{1 - z_m \overline{z_n}}\right| \ge \delta.\]
Let \[\delta_n := \prod_{m \ne n} \left|\frac{z_m - z_n}{1 - z_m \overline{z_n}}\right|.\] If $\delta_n \to 1$ as $n \to \infty$, the interpolating sequence is said to be a {\it thin interpolating sequence}.  For example, a radial sequence $(z_n)$ for which \[(1 - |z_{n+1}|)/(1-|z_n|) \to 0\] as $n \to \infty$ is such a sequence. We will apply Earl's theorem (Theorem~\ref{thm:Earl}) below to Blaschke products for which the zero sequence is a thin interpolating sequence, \cite[Theorem 2]{EARL1969}. We isolate Earl's theorem here for easy reference.

\begin{thm}[Earl's Theorem]\label{thm:Earl} Suppose that $(z_n)$ is an interpolating sequence with
\[\inf_n\prod_{m \ne n} \left|\frac{z_m - z_n}{1 - z_m \overline{z_n}}\right| \ge \delta > 0.\] If $(w_n)$ is any bounded sequence of complex numbers, and $M$ is an arbitrary number greater than
\[\frac{2 - \delta^2 + 2(1 - \delta^2)^{1/2}}{\delta^2} \sup_n |w_n|,\] then there exists an $\alpha \in \mathbb{R}$ and
a  Blaschke product $B$ such that
\[M e^{i \alpha} B(z_j) = w_j~\mbox{for}~j = 1, 2, \ldots.\]
\end{thm}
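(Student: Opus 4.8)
The plan is to realise $B$ not via some abstract interpolation theorem but by \emph{perturbing the given interpolating sequence}: I would look for a Blaschke product whose $j$-th zero $\zeta_j$ lies in a small pseudohyperbolic disc about $z_j$, and pin the $\zeta_j$ down by a fixed-point argument. Write $\rho(z,w)=\left|\frac{z-w}{1-\overline w z}\right|$ for the pseudohyperbolic distance and $b_\zeta(z)=\frac{|\zeta|}{\zeta}\cdot\frac{\zeta-z}{1-\overline\zeta z}$ for the normalised Blaschke factor, so a normalised Blaschke product with zeros $(\zeta_j)$ is $B_\zeta=\prod_j b_{\zeta_j}$; the unimodular constant $e^{i\alpha}$ of the statement is carried along as a free real parameter. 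First I would normalise: with $c_j=w_j/M$ and using $2-\delta^2+2(1-\delta^2)^{1/2}=\bigl(1+(1-\delta^2)^{1/2}\bigr)^2$, the hypothesis on $M$ reads exactly $\sup_j|c_j|<q:=\bigl(1-(1-\delta^2)^{1/2}\bigr)\big/\bigl(1+(1-\delta^2)^{1/2}\bigr)$, and the goal becomes: find $\alpha\in\mathbb R$ and a Blaschke product $B$ with $e^{-i\alpha}B(z_j)=c_j$ for all $j$.

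The engine is the identity $B_\zeta(z_k)=b_{\zeta_k}(z_k)\,P_k(\zeta)$, where $P_k(\zeta):=\prod_{j\ne k}b_{\zeta_j}(z_k)$ is the deleted tail, together with two estimates. The first is a \emph{stability estimate}: the interpolation condition forces $\rho(z_j,z_k)\ge\delta$ for all $j\ne k$, and hence if $\rho(\zeta_j,z_j)\le\eta$ for all $j$ with $\eta<\delta$ then $(\zeta_j)$ is again interpolating and $\inf_k|P_k(\zeta)|\ge\delta_\eta$ for an explicit $\delta_\eta$ with $\delta_\eta\to\delta$ as $\eta\to0$. The second is a \emph{freedom/continuity estimate}: as $\zeta_k$ runs over the pseudohyperbolic circle of radius $t$ about $z_k$, $b_{\zeta_k}(z_k)$ runs over the whole Euclidean circle of radius $t$, so any target of modulus $\le t|P_k(\zeta)|$ can be hit; and $\zeta\mapsto P_k(\zeta)$ is Lipschitz on the set of admissible sequences, uniformly in $k$, because the interpolation condition makes the defining product converge uniformly there.

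Now I would fix $\eta$ with $q<\eta\,\delta_\eta$ and $\eta$ small enough for the stability estimate --- possible precisely because $\sup_j|c_j|<q$, which is where the sharp constant is used. On $X_\eta=\{\zeta:\rho(\zeta_j,z_j)\le\eta\ \forall j\}$, compact in the coordinatewise topology, define $T\colon X_\eta\to X_\eta$ by letting $\zeta'=T(\zeta)$ have $b_{\zeta'_k}(z_k)=e^{i\alpha}c_k/P_k(\zeta)$; this is legitimate since $|e^{i\alpha}c_k/P_k(\zeta)|\le\sup_j|c_j|/\delta_\eta<q/\delta_\eta<\eta$, and $\zeta'_k$ is recovered unambiguously from $b_{\zeta'_k}(z_k)$ near $z_k$ (the map $\zeta\mapsto b_\zeta(z_k)$ is a local homeomorphism there). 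By the second estimate $T$ is continuous, so Schauder--Tychonoff yields a fixed point $\zeta^\ast$ (or, shrinking $\eta$ slightly, $T$ is a genuine contraction on a complete metric space). Its zeros are still interpolating, so $B:=B_{\zeta^\ast}$ is an honest Blaschke product, and $B(z_k)=b_{\zeta^\ast_k}(z_k)P_k(\zeta^\ast)=e^{i\alpha}c_k$, i.e.\ $Me^{-i\alpha}B(z_k)=w_k$; relabel $-\alpha$ as $\alpha$. (Alternatively one runs the argument on $z_1,\dots,z_N$ by a finite-dimensional continuity/degree argument and passes to a coordinatewise limit, the uniform bound $\rho(\zeta_j,z_j)\le\eta$ keeping the limit admissible.)

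The genuinely delicate part is the quantitative pseudohyperbolic bookkeeping: proving the stability estimate with the \emph{exact} dependence $\delta_\eta=\delta_\eta(\delta,\eta)$, and then choosing $\eta$ so as to reach not merely \emph{some} admissible threshold on $M$ but the sharp value $\bigl(2-\delta^2+2(1-\delta^2)^{1/2}\bigr)\delta^{-2}$. A secondary subtlety is that one really builds a Blaschke product times a unimodular constant, and keeping that constant as the free parameter $\alpha$ --- rather than trying to normalise it away --- is what keeps the fixed-point equation $b_{\zeta'_k}(z_k)=e^{i\alpha}c_k/P_k(\zeta)$ solvable.
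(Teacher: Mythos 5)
First, a point of reference: the paper does not prove this statement. It is Earl's theorem, quoted verbatim from \cite[Theorem 2]{EARL1969} purely for later use, so there is no in-paper proof to compare against. Your overall architecture --- realise the interpolant as $Me^{i\alpha}B$ where the $j$-th zero $\zeta_j$ of $B$ is a pseudohyperbolic perturbation of $z_j$, and pin the perturbation down by a continuity/fixed-point argument on the factorisation $B(z_k)=b_{\zeta_k}(z_k)P_k(\zeta)$ --- is in fact Earl's original strategy, so the plan is sound in outline.

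The gap is in the quantitative step you defer, and the heuristic you give for it points the wrong way. You propose to ``fix $\eta$ with $q<\eta\,\delta_\eta$ and $\eta$ small enough,'' noting that $\delta_\eta\to\delta$ as $\eta\to 0$. But the targets you can hit have modulus at most $|b_{\zeta_k}(z_k)|\cdot|P_k(\zeta)|\le\eta\,\delta_\eta\le\eta$, which tends to $0$ with $\eta$; so small $\eta$ proves nothing. To reach the sharp threshold $q=\sigma^2$ with $\sigma=(1-\sqrt{1-\delta^2})/\delta$ you must take $\eta=\sigma$ exactly (not small) and prove the lemma: if $\rho(\zeta_j,z_j)\le\sigma$ for all $j$, then $\prod_{j\ne k}\rho(\zeta_j,z_k)\ge\sigma$ for all $k$. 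This follows from the reverse pseudohyperbolic triangle inequality $\rho(\zeta_j,z_k)\ge\phi(\rho(z_j,z_k))$ with $\phi(t)=(t-\sigma)/(1-\sigma t)$, combined with the submultiplicativity $\phi(s)\phi(t)\ge\phi(st)$ (the difference of cross-products works out to $\sigma(1+\sigma)(1-st)(1-s)(1-t)\ge 0$), which gives $\prod_{j\ne k}\phi\bigl(\rho(z_j,z_k)\bigr)\ge\phi(\delta)=\sigma$ via the defining relation $2\sigma/(1+\sigma^2)=\delta$. That identity is the entire content of the sharp constant; without it your argument yields interpolation with \emph{some} constant $C(\delta)$, not the one in the statement. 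A secondary unproved assertion is that $\zeta\mapsto b_\zeta(z_k)$ admits a continuous right inverse from $\{|w|\le\eta\}$ into the pseudohyperbolic $\eta$-disc about $z_k$ (needed for $T$ to be well defined and continuous): the normalising factor $|\zeta|/\zeta$ is discontinuous at $0$, so your ``local homeomorphism'' claim fails for the finitely many $k$ with $|z_k|\le\eta$, and even when $0$ lies outside the disc, injectivity on each pseudohyperbolic circle requires an argument. The standard repair is to parametrise $\zeta_k=(z_k-u_k)/(1-\overline{z_k}\,u_k)$ with $|u_k|\le\eta$, run the fixed point in the variables $u_k$ using the honest M\"obius map $u_k\mapsto u_k\,$ (no normalising factor), and absorb the resulting unimodular constants into the free parameter $\alpha$ --- which is precisely why the statement carries the $e^{i\alpha}$.
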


From Earl's theorem, we see that if we are given $(w_n)$, a bounded sequence of complex numbers, we can find $g \in H^\infty$ such that $g(z_n) = w_n$ for every $n$ and 
\[\|g\| \le \frac{2 - \delta^2 + 2(1 - \delta^2)^{1/2}}{\delta^2} \sup_{n} |w_n|.\]

A second result will be useful here as well.

\begin{thm}\label{AGGIS}\cite{AXLERGORKIN, GUILLORYIZUCHISARASON} Let $f \in H^\infty + C(\mathbb{T})$ and let $B$ be an interpolating Blaschke product with zeros $(z_n)$. Then $\overline{B} f \in H^\infty + C(\mathbb{T})$ if and only if $f(z_n) \to 0$.
\end{thm}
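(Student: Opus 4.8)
The plan is to treat the two implications separately, since they are of quite different character: the forward implication uses nothing about $B$ beyond its being a Blaschke product, whereas the reverse one is precisely where the interpolating hypothesis enters, via Earl's Theorem~\ref{thm:Earl}. I note first that if $B$ is a \emph{finite} Blaschke product then $\overline B\in C(\TT)\subseteq H^\infty+C(\TT)$, so $\overline B f\in H^\infty+C(\TT)$ automatically; hence I will assume $B$ has infinitely many zeros, so that $|z_n|\to1$. The one external fact I would invoke (besides Earl's theorem) is the standard description $H^\infty+C(\TT)=\overline{\bigcup_{N\ge0}\overline z^{\,N}H^\infty}$ (closure in $L^\infty$); equivalently, $\psi\in H^\infty+C(\TT)$ iff $\dist(z^N\psi,H^\infty)\to0$ as $N\to\infty$.

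For ``$\overline B f\in H^\infty+C(\TT)\Rightarrow f(z_n)\to0$'' I would write $\overline B f=h+c$ with $h\in H^\infty$, $c\in C(\TT)$. Since $|B|=1$ a.e.\ on $\TT$ this gives $f=Bh+Bc$ as $L^\infty$ functions; taking Poisson extensions at the points $z_n\in\DD$ and using that $Bh\in H^\infty$ with $(Bh)(z_n)=B(z_n)h(z_n)=0$, one obtains $f(z_n)=(Bc)(z_n)$. It then remains to show $(Bc)(z_n)\to0$ for every $c\in C(\TT)$. I would check this first for a trigonometric polynomial $c$: the nonnegative-frequency part of $Bc$ lies in $H^\infty$ and vanishes at the $z_n$, while each term $\overline z^{\,j}B$ with $j\ge1$ has Poisson extension at $z_n$ equal, after telescoping against $B(z_n)=0$, to a finite sum of terms of the form $b_l\,e^{i(l-j)\theta_n}\bigl(r_n^{\,j-l}-r_n^{\,l-j}\bigr)$, where $r_n=|z_n|$, and these tend to $0$ as $r_n\to1$. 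For a general $c$ I would then approximate uniformly by trigonometric polynomials, using that Poisson extension is an $L^\infty$-contraction and $\|B\|_\infty=1$. (This direction does not use that $B$ is interpolating.)

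For ``$f(z_n)\to0\Rightarrow\overline B f\in H^\infty+C(\TT)$'' fix $\varepsilon>0$. Using the description above I would write $f=\overline z^{\,N}g+e$ with $g\in H^\infty$ and $\|e\|_\infty<\varepsilon$. A short Poisson-kernel estimate gives $(\overline z^{\,N}g)(z_n)=\overline{z_n}^{\,N}g(z_n)+o(1)$ as $n\to\infty$, hence $r_n^{\,N}|g(z_n)|\le|f(z_n)|+\varepsilon+o(1)$ and so $\limsup_n|g(z_n)|\le\varepsilon$. I would then pick $n_0$ with $|g(z_n)|<2\varepsilon$ for $n>n_0$ and factor $B=B'B''$, with $B''$ the finite Blaschke product on $z_1,\dots,z_{n_0}$ and $B'$ the (still interpolating, with constant $\ge\delta$) Blaschke product on $(z_n)_{n>n_0}$. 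Earl's Theorem~\ref{thm:Earl}, applied to $(z_n)_{n>n_0}$ with data $\bigl(g(z_n)\bigr)_{n>n_0}$, then furnishes $\tilde g\in H^\infty$ with $\tilde g(z_n)=g(z_n)$ for $n>n_0$ and $\|\tilde g\|_\infty\le C_\delta\varepsilon$, where $C_\delta$ depends only on $\delta$. Since $g-\tilde g\in H^\infty$ vanishes on the simple zeros of $B'$, I can write $g-\tilde g=B'k$ with $k\in H^\infty$ and $\|k\|_\infty\le\|g\|_\infty+C_\delta\varepsilon$, whence
\[
\overline B f=\overline z^{\,N}\overline B\,\tilde g+\overline z^{\,N}\overline{B''}\,k+\overline B e .
\]
The middle summand lies in $H^\infty+C(\TT)$ because $\overline{B''}\in C(\TT)$, so $\overline z^{\,N}\overline{B''}k\in C(\TT)\cdot H^\infty\subseteq H^\infty+C(\TT)$, while the two outer summands have $L^\infty$-norm $\|\tilde g\|_\infty+\|e\|_\infty\le(C_\delta+1)\varepsilon$. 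Letting $\varepsilon\to0$ and using that $H^\infty+C(\TT)$ is closed then yields $\overline B f\in H^\infty+C(\TT)$.

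\textbf{Main obstacle.} The substantive direction is the reverse one, and the single step that genuinely requires $B$ to be interpolating is the appeal to Earl's theorem: one needs an $H^\infty$ interpolant of the data $(g(z_n))_{n>n_0}$ whose norm is controlled by $\varepsilon=\sup_n|g(z_n)|$ --- not merely by $\|g\|_\infty$ --- with a constant depending only on the interpolation constant $\delta$ and not on the particular sequence, and this uniform bound is exactly what Theorem~\ref{thm:Earl} supplies and what fails for non-interpolating zero sequences. The reduction from $f\in H^\infty+C(\TT)$ to genuinely analytic data is routine; the only point requiring care is that replacing $f$ by $\overline z^{\,N}g$ perturbs the values $f(z_n)$ by only $o(1)$ as $n\to\infty$, which follows from $|z_n|\to1$. (An alternative, more operator-theoretic route would reduce the statement, for $f\in H^\infty$, to compactness of the truncated Toeplitz operator $A^B_f$ through the identity presenting $H_{\overline B f}$ as a unitary composed with $A^B_f$ and a projection; the ``diagonal'' of $A^B_f$ against the normalized reproducing kernels $k_{z_n}/\|k_{z_n}\|$ --- which form a Riesz basis of $K_B$ because $B$ is interpolating --- is precisely the sequence $(f(z_n))$. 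The argument sketched above avoids this machinery.)
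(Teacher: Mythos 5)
The paper does not prove this statement: Theorem~\ref{AGGIS} is quoted as a known result of Axler--Gorkin and Guillory--Izuchi--Sarason, so there is no internal proof to compare against. Your argument is correct and is essentially the standard proof from those references. The forward direction is fine: the telescoping identity for the harmonic extension of $\overline z^{\,j}B$ at a zero $z_n$ of $B$ really does leave only the finite sum $\sum_{l=0}^{j-1}b_l e^{i(l-j)\theta_n}\bigl(r_n^{\,j-l}-r_n^{\,l-j}\bigr)$, which tends to $0$ since $|b_l|\le 1$ and $r_n\to 1$, and uniform approximation by trigonometric polynomials plus the contractivity of harmonic extension finishes it. In the reverse direction the three ingredients --- the Sarason description $H^\infty+C(\TT)=\overline{\bigcup_N \overline z^{\,N}H^\infty}$, the fact that a tail $(z_n)_{n>n_0}$ of an interpolating sequence is interpolating with constant at least $\delta$, and Earl's theorem giving an interpolant whose norm is controlled by $\sup_{n>n_0}|g(z_n)|$ rather than by $\|g\|_\infty$ --- are exactly the right ones, and you correctly identify the Earl step as the only place the interpolating hypothesis is used. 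Two cosmetic points: the bound from Earl's theorem should read $\|\tilde g\|_\infty\le 2C_\delta\varepsilon$ (since $\sup_{n>n_0}|g(z_n)|<2\varepsilon$), which changes nothing; and the division $g-\tilde g=B'k$ with $\|k\|_\infty=\|g-\tilde g\|_\infty$ is justified because interpolating sequences consist of distinct points, so $B'$ has simple zeros and $|B'|=1$ a.e.\ on $\TT$. With those trivial adjustments the proof is complete.
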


Via the Chang-Marshall theorem, this theorem implies a more general result for closed subalgebras of $L^\infty$ containing $H^\infty$ with $u$ an arbitrary function in $L^\infty$ of norm at most one. We state the version that we will need below, with a reference to the more general statement.

\begin{thm}\label{AGGIS1}(\cite{AXLERGORKIN}, Theorem 4.) Let $h \in H^\infty + C$ and let $u$ be a function in $H^\infty + C$ with $\|u\| \le 1$. If $h(1 - |u|) = 0$ on $M(H^\infty) \setminus \mathbb{D}$, then $h H^\infty[\overline{u}] \subseteq H^\infty + C$.
\end{thm}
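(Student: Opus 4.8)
The plan is to deduce this from Theorem~\ref{AGGIS} together with the Chang--Marshall theorem, exactly along the lines the text suggests, with the maximal ideal space of the Douglas algebra $H^\infty[\overline{u}]$ doing the main work. Throughout, $|u|$ denotes the modulus of the natural (continuous) extension $\hat u$ of $u$ to $M(H^\infty)$, so the hypothesis says precisely that $\hat h$ vanishes at every point of $M(H^\infty)\setminus\mathbb{D}$ where $|\hat u|<1$.

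First I would reduce to a single interpolating Blaschke product. Since $H^\infty+C$ is a closed algebra and since, for $g\in H^\infty$, the function $hg$ inherits the hypothesis — by multiplicativity of the Gelfand transform, $\widehat{hg}\,(1-|u|)=\hat g\cdot\bigl(\hat h\,(1-|u|)\bigr)=0$ on $M(H^\infty)\setminus\mathbb{D}$ — and since by the Chang--Marshall theorem $H^\infty[\overline{u}]$ is the closed algebra generated by $H^\infty$ together with the conjugates $\overline{b}$ of the interpolating Blaschke products $b$ invertible in $H^\infty[\overline{u}]$, it is enough to show $h\overline{B}\in H^\infty+C$ whenever $B$ is a finite product of such $b$'s. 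Invoking (the form for such $B$ of) Theorem~\ref{AGGIS}, this reduces to showing that $h(z_n)\to 0$ along the zero sequence $(z_n)$ of every interpolating Blaschke product $b$ with $\overline{b}\in H^\infty[\overline{u}]$.

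To prove the latter I would argue by contradiction. If $|h(z_{n_k})|\ge\varepsilon>0$ along a subsequence, then, since the zeros of $b$ accumulate only on $\mathbb{T}$, some subnet converges to a point $x\in M(H^\infty)\setminus\mathbb{D}$; continuity of $\hat h$ on $M(H^\infty)$ (valid since $h\in H^\infty+C$) gives $|\hat h(x)|\ge\varepsilon>0$, so by hypothesis $|\hat u(x)|=1$. On the other hand $|\hat b(x)|=\lim_k|b(z_{n_k})|=0$, so $x$ lies in the zero set of $b$ in $M(H^\infty)$. Now $\overline{b}\in H^\infty[\overline{u}]$ means $b$ is invertible in $H^\infty[\overline{u}]$, hence $\hat b$ does not vanish on $M(H^\infty[\overline{u}])$; since a point $x\in M(H^\infty)\setminus\mathbb{D}$ with $|\hat u(x)|=1$ represents a multiplicative linear functional on $H^\infty[\overline{u}]$, i.e.\ $x\in M(H^\infty[\overline{u}])$, this is the desired contradiction, so $h(z_n)\to 0$.

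The step I expect to be the heart of the argument — and the one to set up carefully — is the last claim: that $|\hat u(x)|=1$ forces $x\in M(H^\infty[\overline{u}])$. The point is that $|\hat u(x)|=\bigl|\int u\,d\mu_x\bigr|=1$, together with $\|u\|_\infty\le 1$ and $\mu_x$ a probability measure, forces $u$ to equal a single unimodular constant $\lambda$ $\mu_x$-almost everywhere. Writing $u=\lambda+v$ with $v=0$ $\mu_x$-a.e., every element $\sum_j g_j\overline{u}^{\,k_j}$ of the dense subalgebra of $H^\infty[\overline{u}]$ agrees $\mu_x$-a.e.\ with $\sum_j\overline{\lambda}^{\,k_j}g_j\in H^\infty$, so $\mu_x$ is multiplicative there and hence, by boundedness, on all of $H^\infty[\overline{u}]$. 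The remaining point needing attention is purely bookkeeping: a finite product of interpolating Blaschke products need not be interpolating, but its conjugate still lies in $H^\infty[\overline{u}]$ and its zero set is a finite union of interpolating sequences, so Theorem~\ref{AGGIS} still applies in the form used above and the contradiction argument handles each of the finitely many interpolating factors separately.
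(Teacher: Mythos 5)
First, a point of comparison: the paper does not prove this statement at all --- it is quoted from Axler--Gorkin \cite{AXLERGORKIN} (their Theorem 4, proved there in the setting of general Douglas algebras) --- so there is no internal argument to measure yours against. Taken on its own terms, most of your proof is correct and carefully set up: the reading of $|u|(x)$ as $|\hat u(x)|$ is the right one (under the alternative reading $\int |u|\,d\mu_x$ the statement is false already for $u$ an infinite interpolating Blaschke product and $h=1$); the Chang--Marshall reduction to terms $g\,\overline{b_1}\cdots\overline{b_m}$ is legitimate; the cluster-point argument showing that $\hat h$ vanishes on $Z(b)\cap\bigl(M(H^\infty)\setminus\mathbb{D}\bigr)$, hence that $h(z_n)\to 0$, for every interpolating Blaschke product $b$ invertible in $H^\infty[\overline{u}]$ is sound; and your key computation --- that $|\hat u(x)|=1$ forces $u$ to equal a unimodular constant $\mu_x$-a.e.\ and hence $x\in M(H^\infty[\overline{u}])$ --- is exactly right. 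Via Theorem~\ref{AGGIS} this does give $h\overline{b}\in H^\infty+C(\TT)$ for each single such $b$.

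The gap is the step you dismiss as ``purely bookkeeping.'' Theorem~\ref{AGGIS} is stated for an interpolating Blaschke product, and the passage to a finite product $B=b_1\cdots b_m$ is not achieved by ``handling each factor separately.'' Your contradiction argument does show $h(z_n)\to 0$ along the zeros of each $b_i$, whence $h\overline{b_i}\in H^\infty+C(\TT)$ for each $i$; but to iterate, say to get $(h\overline{b_2})\,\overline{b_1}\in H^\infty+C(\TT)$, Theorem~\ref{AGGIS} asks that the \emph{harmonic extension of $h\overline{b_2}$} tend to zero along the zeros of $b_1$, and this does not follow from $\hat h(z_n)\to 0$: the harmonic extension of a product is not the product of the harmonic extensions, and the only a priori bound is $\bigl|\widehat{h\overline{b_2}}(z_n)\bigr|\le \widehat{|h|}(z_n)$, which can stay bounded away from $0$ even when $\hat h(z_n)\to 0$ (take $h$ inner and $\overline{b_2}$ replaced by $\overline h$ to see the mechanism fail completely). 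What you actually need is the Carleson--Newman version of Theorem~\ref{AGGIS}: for $B$ a finite product of interpolating Blaschke products with zeros $(w_n)$ and $f\in H^\infty+C(\TT)$, one has $\overline B f\in H^\infty+C(\TT)$ if and only if $f(w_n)\to 0$. That statement is true and known, but it is a theorem in its own right, not bookkeeping; you should either cite it explicitly or prove it (for instance via the local, support-set characterization of membership in $H^\infty+C(\TT)$, which is essentially how \cite{AXLERGORKIN} argues in the general Douglas-algebra setting). With that one ingredient supplied, your argument closes up.
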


We remind the reader that we identify a function in $L^\infty$ with its Gelfand transform. For $f \in H^\infty + C$, when we evaluate $f$ at a point $z \in \mathbb{D}$, we are evaluating the Poisson extension of the function. In order to state our results on the disk, we need the following technical lemma.

\begin{lem}\label{prop:limsup} Let $B$ be an interpolating Blaschke product with zero sequence $(z_n)$ and $f \in H^\infty + C$. Then
\[\max\{|f(x)|: x \in Z(B) \cap M(H^\infty + C)\} = \limsup |f(z_n)|.\]
\end{lem}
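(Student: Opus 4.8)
The plan is to prove the two inequalities separately, using the fact that $Z(B) \cap M(H^\infty + C)$ is the part of the zero set of $B$ that ``lives at the boundary,'' and that each point there can be approximated by zeros $z_n$ (via the corona theorem) while conversely any cluster point of the sequence $(z_n)$ in $M(H^\infty)$ automatically lies in $Z(B) \cap M(H^\infty+C)$.

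First I would establish the inequality $\limsup |f(z_n)| \le \max\{|f(x)| : x \in Z(B) \cap M(H^\infty + C)\}$. Pick a subsequence $(z_{n_k})$ along which $|f(z_{n_k})| \to \limsup |f(z_n)|$. By compactness of $M(H^\infty)$ we may pass to a further subsequence so that $z_{n_k} \to x$ for some $x \in M(H^\infty)$; since the $z_n$ cannot cluster in $\mathbb{D}$ (a Blaschke sequence has no accumulation point inside $\mathbb{D}$), we have $x \in M(H^\infty) \setminus \mathbb{D} = M(H^\infty + C)$. Because $B(z_{n_k}) = 0$ for all $k$ and $B$ is continuous on $M(H^\infty)$, we get $\hat{B}(x) = 0$, so $x \in Z(B) \cap M(H^\infty + C)$. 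Since $f \in H^\infty + C$ is continuous on $M(H^\infty)$, $|f(x)| = \lim |f(z_{n_k})| = \limsup |f(z_n)|$, giving the bound.

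For the reverse inequality I would take any $x \in Z(B) \cap M(H^\infty + C)$; by Carleson's corona theorem $\mathbb{D}$ is dense in $M(H^\infty)$, so there is a net $(\lambda_\alpha)$ in $\mathbb{D}$ with $\lambda_\alpha \to x$. The point to exploit is that $x$ being a boundary zero of the \emph{interpolating} Blaschke product $B$ forces the net to approach the zero set in the pseudohyperbolic sense: using the standard estimate $|B(\lambda)| \ge c\, \rho(\lambda, Z_{\mathbb{D}}(B))$ valid for interpolating $B$ (equivalently, that $1-|B|$ behaves like the distance to the zeros), and $\hat B(x)=0$, one finds zeros $z_{n_\alpha}$ of $B$ with $\rho(\lambda_\alpha, z_{n_\alpha}) \to 0$; hence $z_{n_\alpha} \to x$ as well. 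Then continuity of $f$ on $M(H^\infty)$ gives $|f(z_{n_\alpha})| \to |f(x)|$, so $|f(x)| \le \limsup |f(z_n)|$. Taking the max over $x$ finishes the proof.

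The main obstacle is the second direction: one must show that a boundary point of $M(H^\infty+C)$ at which $B$ vanishes is actually a limit (in $M(H^\infty)$) of the \emph{zeros} of $B$, not merely of arbitrary points where $|B|$ is small. This is exactly where the interpolating hypothesis is essential — it is what makes $\{|B| < \epsilon\}$ in $\mathbb{D}$ a union of small pseudohyperbolic disks around the $z_n$, so that $\hat B(x) = 0$ propagates to ``$x$ is in the closure of $Z_{\mathbb{D}}(B)$.'' I would either cite the relevant structural fact about the zero set of an interpolating Blaschke product in $M(H^\infty)$ (e.g.\ that $Z(B) \cap M(H^\infty+C)$ equals the closure of $\{z_n\}$ in $M(H^\infty+C)$), or give the short pseudohyperbolic argument sketched above; the remaining steps are routine applications of the corona theorem and the continuity of $H^\infty+C$ functions on $M(H^\infty)$.
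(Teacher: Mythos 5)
Your proof is correct and follows essentially the same route as the paper's: the inequality $\limsup|f(z_n)|\le\max\{|f(x)|\}$ comes from taking a cluster point of the zero sequence in $M(H^\infty+C)$ and using continuity of the Gelfand transform, while the reverse inequality rests on the same key fact (due to Hoffman, and cited by the paper) that a point of $M(H^\infty+C)$ at which an interpolating Blaschke product vanishes lies in the closure of its zero sequence. The only cosmetic differences are that the paper phrases the first direction as a proof by contradiction and simply cites the Hoffman fact rather than deriving it from a pseudohyperbolic lower bound on $|B|$.
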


\begin{proof}
Using continuity, choose $x_0 \in M(H^\infty + C)$ to be a point at which $B(x_0) = 0$ and $|f(x_0)| = \max\{|f(x)|: x \in Z(B) \cap M(H^\infty + C)\}$. 
Let \[\mathcal{O} = \{y \in M(H^\infty): |f(y) - f(x_0)| < 1/n\} \bigcap \big(M(H^\infty) \setminus \{z: |z| \le 1 - 1/n\}\big).\] Since the Gelfand transform is continuous on $M(H^\infty)$, this is an open set. Since we assume $B$ is interpolating and $B(x_0) = 0$, the point $x_0$ is in the closure of the zeros of $B$, \cite[p. 83]{HOFFMAN}. Thus there exists $z_n \in \mathbb{D} \cap \mathcal{O}$. Therefore,
\[|f(x_0)| - 1/n \le |f(z_n)| \le |f(x_0)| + 1/n~\mbox{and}~|z_n| \ge 1 - 1/n.\]
Therefore $\limsup_n |f(z_n)| \ge |f(x_0)|$.\\

Suppose that $\limsup_n|f(z_n)| > |f(x_0)| + \alpha$ for some $\alpha > 0$. Then for all $\varepsilon < \alpha/2$ there exists $N(\varepsilon)$ such that $\sup_{n \ge N(\varepsilon)}|f(z_n)| \ge |f(x_0)| + \alpha - \varepsilon > |f(x_0)| + \alpha/2$. Thus, we find a sequence of points $(z_{n_{N(\varepsilon}})$ for which $B(z_{n_{N(\varepsilon}}) = 0$ and 
\[|f(z_{n_{N(\varepsilon}})|> |f(x_0)| + \alpha/2.\]
Choosing a point $x$ in the closure, we obtain $x \in M(H^\infty+C)$ with $|f(x)| > |f(x_0)|$, a contradiction.
\end{proof}

If $B$ is an interpolating Blaschke product with zeros $(z_n)$ we define
\[\tilde{\delta_n} := \inf_{k > n} \prod_{m > n, m \ne k} \left|\frac{z_m - z_k}{1 - z_m \overline{z_k}}\right|.\]
Note that $\tilde{\delta_n}$ is a bounded increasing sequence of real numbers and therefore this sequence converges. We let $\tilde{\delta}$ denote this limit.

\begin{thm} Let $B$ be an interpolating Blaschke product with zeros $(z_n)$ and let $f \in H^\infty + C(\mathbb{T})$. Then
\[\limsup |f(z_n)| \le \|f + B(H^\infty + C(\mathbb{T}))\| \le \frac{2 - \tilde{\delta}^2 + 2(1 - \tilde{\delta}^2)^{1/2}}{\tilde{\delta}^2}\limsup|f(z_n)|.\] \end{thm}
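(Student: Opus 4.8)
The plan is to prove the two inequalities separately. The lower bound is essentially soft and follows from the earlier machinery, while the upper bound is the place where Earl's theorem (Theorem~\ref{thm:Earl}) and the tail quantity $\tilde\delta$ enter. For the lower bound, note that $H^\infty + C(\mathbb{T})$ is the natural ambient algebra: for any $g \in H^\infty$ and any zero $z_n$ of $B$ we have $(f + Bg)(z_n) = f(z_n)$, so $\|f + Bg\|_\infty \ge |f(z_n)|$, and more is true if we pass to $M(H^\infty + C)$. Concretely, I would take a point $x \in Z(B) \cap M(H^\infty + C)$ realizing the maximum in Lemma~\ref{prop:limsup}; since $B(x) = 0$, for any $\varphi \in B(H^\infty + C)$ we have $\varphi(x) = 0$, hence $\|f + \varphi\|_\infty \ge |\widehat{f + \varphi}(x)| = |f(x)|$. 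Taking the infimum over $\varphi$ and invoking Lemma~\ref{prop:limsup} gives $\|f + B(H^\infty + C)\| \ge |f(x)| = \limsup_n |f(z_n)|$.

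For the upper bound, the idea is to peel off finitely many zeros of $B$ and apply Earl's theorem to a tail. Write $L = \limsup_n |f(z_n)|$. Fix $\varepsilon > 0$. Choose $N$ large enough that $|f(z_n)| \le L + \varepsilon$ for all $n > N$, and also large enough that the tail interpolation constant $\inf_{k > N}\prod_{m > N,\, m \ne k}\bigl|\frac{z_m - z_k}{1 - z_m\overline{z_k}}\bigr| = \tilde\delta_N$ is within $\varepsilon$ of $\tilde\delta$ (possible since $\tilde\delta_n \nearrow \tilde\delta$). Let $B_N$ be the finite Blaschke product with zeros $z_1,\dots,z_N$ and $\tilde B$ the infinite Blaschke product with zeros $z_{N+1}, z_{N+2}, \dots$, so $B = B_N \tilde B$ and $\tilde B$ is an interpolating Blaschke product with constant $\tilde\delta_N$. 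Apply Earl's theorem to the tail sequence $(z_n)_{n > N}$ with target values $w_n := f(z_n)$: there is a Blaschke product $h$ and $\alpha \in \mathbb{R}$ with $M e^{i\alpha} h(z_n) = f(z_n)$ for all $n > N$, where $M$ is any number exceeding $\frac{2 - \tilde\delta_N^2 + 2(1-\tilde\delta_N^2)^{1/2}}{\tilde\delta_N^2}(L + \varepsilon)$. Set $g := M e^{i\alpha} h \in H^\infty$, so $\|g\|_\infty \le M$ and $g$ agrees with $f$ on the tail of the zero set.

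It remains to correct for the finitely many zeros $z_1, \dots, z_N$ and to land inside the right coset. Consider $f - g$: it vanishes at $z_{N+1}, z_{N+2}, \dots$, so $\tilde B$ divides $f - g$ in the sense that $(f - g)/\tilde B \in H^\infty + C$ — here I would invoke Theorem~\ref{AGGIS} (with $f - g \in H^\infty + C$ and $(f-g)(z_n) \to 0$ along the zeros of the interpolating product $\tilde B$) to get $\overline{\tilde B}(f - g) \in H^\infty + C$. Then $f - g = \tilde B \cdot q$ with $q = \overline{\tilde B}(f-g) \in H^\infty + C$, and since $B_N$ is invertible in $H^\infty + C$ (it is a finite Blaschke product, hence continuous and zero-free on $\mathbb{T}$, so $1/B_N \in C(\mathbb{T}) \subseteq H^\infty + C$ — actually $\overline{B_N} \in C(\mathbb{T})$ does the job), we can write $q = B_N \cdot (\overline{B_N} q)$ with $\overline{B_N} q \in H^\infty + C$. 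Hence $f - g = B_N \tilde B \cdot (\overline{B_N} q) = B \cdot r$ with $r \in H^\infty + C$, i.e. $f \equiv g \pmod{B(H^\infty + C)}$. Therefore $\|f + B(H^\infty + C)\| \le \|g\|_\infty \le M$. Letting $M \downarrow \frac{2-\tilde\delta_N^2 + 2(1-\tilde\delta_N^2)^{1/2}}{\tilde\delta_N^2}(L+\varepsilon)$ and then $N \to \infty$ (so $\tilde\delta_N \to \tilde\delta$) and $\varepsilon \to 0$, and using continuity of $t \mapsto \frac{2 - t^2 + 2(1-t^2)^{1/2}}{t^2}$ at $t = \tilde\delta$, yields the claimed upper bound.

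The main obstacle I anticipate is the bookkeeping in the last paragraph: one must be careful that $g$ need only match $f$ on the tail (not all of $\{z_n\}$), that $f - g$ genuinely lies in $H^\infty + C$ so that Theorem~\ref{AGGIS} applies to $\tilde B$ rather than to $B$, and that the finite factor $B_N$ contributes nothing to the norm estimate since it is unimodular on $\mathbb{T}$ and invertible in the Sarason algebra. One technical point worth checking is that $\tilde B$ is indeed interpolating with constant exactly $\tilde\delta_N$ — this is immediate from the definition of $\tilde\delta_n$ as an infimum over the tail, but it is the reason the tail quantity $\tilde\delta$, rather than the full Carleson constant $\delta$, is the right one to appear in the estimate.
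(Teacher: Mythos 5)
Your proposal is correct and follows essentially the same route as the paper: the lower bound via evaluation at a point of $Z(B)\cap M(H^\infty+C(\mathbb{T}))$ combined with Lemma~\ref{prop:limsup}, and the upper bound by discarding the first $N$ zeros (the finite factor being invertible in $H^\infty+C(\mathbb{T})$), applying Earl's theorem to the tail with constant $\tilde{\delta}_N$, and using Theorem~\ref{AGGIS} to place $f-g$ in the correct coset before letting $N\to\infty$. The only differences are notational (your $B_N$ and $\tilde B$ are the paper's $b_N$ and $B_N$) and an extra $\varepsilon$ in the bookkeeping, neither of which affects the argument.
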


\begin{proof} For every $h \in H^\infty + C(\mathbb{T})$ and $x \in M(H^\infty + C(\mathbb{T}))$ with $x(B) = 0$, we have
\[\|f + B h\| \ge |x(f) + x(B h)| = |x(f)|.\] Thus, 
\[\|f + B(H^\infty + C(\mathbb{T}))\| \ge \max \{|f(x)|: x \in M(H^\infty + C(\mathbb{T})), x(B) = 0\}.\]
By Proposition~\ref{prop:limsup}, we obtain the lower inequality.

For the upper inequality, we note that the conjugate of the function $b_N(z):=\prod_{j = 1}^N \frac{z - z_j}{1 - \overline{z_j}z}$ lies in $H^\infty + C(\mathbb{T})$ and therefore, writing $B_N = B \overline{b_N}$ we see that $B_N$ is the Blaschke product with zero sequence $(z_n)_{n > N}$. In particular, $\delta(B_N) \ge \tilde{\delta}_N$. Now, $H^\infty + C(\mathbb{T})$ is an algebra and $1 = \overline{b_N} b_N$, so
\[\|f + B(H^\infty+C(\mathbb{T}))\| = \|f + (B\overline{b_N})(H^\infty + C(\mathbb{T}))\|= \|f + B_N (H^\infty+C(\mathbb{T}))\|.\] Choose $g_N \in H^\infty$, using Earl's theorem, so that $g_N = f$ on $(z_j)_{j > N}$ and $g_N$ satisfies the norm estimates \[\|g_N\| \le \frac{2 - \tilde{\delta}_N^2 + 2(1 - \tilde{\delta}_N^2)^{1/2}}{\delta_N^2} \sup_{n > N} |w_n|.\] Then $f - g_N \in B_N(H^\infty+C(\mathbb{T})) = B(H^\infty + C(\mathbb{T}))$, by Theorem~\ref{AGGIS}. Therefore,
\begin{multline}\label{eqn:upper}
\|f + B(H^\infty + C(\mathbb{T}))\| = \|g_N + B(H^\infty+C(\mathbb{T}))\| \\
 \le  \|g_N\| \le \frac{2 - \tilde{\delta}_N^2 + 2(1 - \tilde{\delta}_N^2)^{1/2}}{\tilde{\delta}_N^2} \sup_{n > N} |f(z_n)|.
\end{multline}
Now $(\tilde{\delta}_n)$ is an increasing sequence and therefore the constants appearing on the right-hand side of \eqref{eqn:upper} form a decreasing sequence. Also $(\sup_{n > N} |f(z_n)|)$ is a decreasing sequence. Taking the limit yields the desired upper bound.

\end{proof}

For thin interpolating sequences the result is especially nice. In this case, $\delta_n \to 1$ implies that $\tilde{\delta}_n \to 1$. Thus we have the following corollary.

\begin{cor}
Let $B$ be a thin interpolating Blaschke product with zero sequence $(z_n)$ and let $f \in H^\infty + C(\mathbb{T})$. Then
\[ \|f + B(H^\infty + C(\mathbb{T}))\| = \limsup|f(z_n)|.\]
\end{cor}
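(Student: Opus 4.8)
The plan is to derive the corollary as a direct specialization of the preceding theorem, whose two-sided estimate
\[
\limsup |f(z_n)| \le \|f + B(H^\infty + C(\mathbb{T}))\| \le \frac{2 - \tilde{\delta}^2 + 2(1 - \tilde{\delta}^2)^{1/2}}{\tilde{\delta}^2}\limsup|f(z_n)|
\]
already brackets the quantity of interest. All that remains is to show that for a thin interpolating Blaschke product the multiplicative constant on the right collapses to $1$, which forces the two bounds to coincide.

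First I would recall that $B$ being thin means $\delta_n := \prod_{m \ne n} |(z_m - z_n)/(1 - z_m \overline{z_n})| \to 1$ as $n \to \infty$. I would then observe that $\tilde{\delta}_n$, as defined just before the theorem, is an infimum over $k > n$ of products that omit more factors than appear in $\delta_k$; more precisely, each factor $|(z_m - z_k)/(1 - z_m \overline{z_k})|$ with $m > n$, $m \ne k$ is one of the factors of $\delta_k$, and the remaining factors of $\delta_k$ (those with $m \le n$) are at most $1$, so $\prod_{m > n,\, m \ne k} |(z_m - z_k)/(1 - z_m \overline{z_k})| \ge \delta_k$. Taking the infimum over $k > n$ gives $\tilde{\delta}_n \ge \inf_{k > n} \delta_k$. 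Since $\delta_k \to 1$, the right-hand side tends to $1$; as $\tilde{\delta}_n \le 1$ always, the squeeze theorem yields $\tilde{\delta}_n \to 1$, hence $\tilde{\delta} = \lim_n \tilde{\delta}_n = 1$.

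Next I would substitute $\tilde{\delta} = 1$ into the Earl-type constant: $\frac{2 - \tilde{\delta}^2 + 2(1 - \tilde{\delta}^2)^{1/2}}{\tilde{\delta}^2}$ evaluates to $\frac{2 - 1 + 0}{1} = 1$. Plugging this into the upper bound of the theorem gives $\|f + B(H^\infty + C(\mathbb{T}))\| \le \limsup |f(z_n)|$, while the lower bound of the theorem gives the reverse inequality. Combining the two yields the claimed equality $\|f + B(H^\infty + C(\mathbb{T}))\| = \limsup |f(z_n)|$.

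There is essentially no hard part here — the corollary is designed to be a clean payoff of the theorem — but the one point requiring a moment of care is the comparison between $\tilde{\delta}_n$ and the original $\delta_k$. One must be careful that $\tilde{\delta}_n$ is built from products over the truncated index set $\{m > n\}$ rather than all indices, so that it is bounded below by (a tail infimum of) the full $\delta_k$ rather than equal to it; writing out that inequality of products, using only that each deleted factor with index $\le n$ is at most $1$, is the single step where the argument has any content. Everything else is substitution and the squeeze theorem.
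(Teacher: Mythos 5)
Your proof is correct and follows exactly the route the paper takes: the paper's justification is the one-line remark that $\delta_n \to 1$ implies $\tilde{\delta}_n \to 1$, after which the Earl constant in the theorem's upper bound collapses to $1$. You have simply supplied the (correct) detail behind that implication, namely that dropping factors of modulus at most $1$ gives $\tilde{\delta}_n \ge \inf_{k>n}\delta_k$.
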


\begin{remark}
Note that in \eqref{eqn:upper} if $f(z_n) \to 0$, then $f \in B(H^\infty + C(\mathbb{T}))$, which  is the result of Theorem~\ref{AGGIS}.
\end{remark}

We recall a result of Bessonov  \cite[Prop. 2.1]{B2015}.

\begin{prop}\label{Bes1} Let $u$ be an inner function and $\varphi \in H^\infty + C(\mathbb{T})$. Then the truncated Toeplitz operator $A_\varphi^u: K_u \to K_u$ is compact if and only if $\varphi \in u(H^\infty + C(\mathbb{T}))$. \end{prop}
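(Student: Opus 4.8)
The plan is to prove both directions, with the nontrivial content being the sufficiency: if $\varphi \in u(H^\infty+C(\TT))$ then $A_\varphi^u$ is compact. For necessity, suppose $A_\varphi^u$ is compact. The zeros of $u$ in $M(H^\infty+C)$ carry point evaluations that are bounded by $\|A_\varphi^u\|$ in a quantitative sense: by Corollary~1 of \cite{GR2010} together with the continuity of $\hat\varphi$ on $M(H^\infty)$, one has $\sup\{|\hat\varphi(x)|:x\in Z(u)\cap M(H^\infty+C)\}\le\|A_\varphi^u\|$. Applying this to the ``tail'' inner functions $u_N$ obtained by dividing out finitely many zeros of $u$ (when $u$ has a Blaschke part; the singular part is handled by restricting to support sets) and using that $A_\varphi^{u_N}$ is, up to finite rank, a compression of $A_\varphi^u$, one forces $\hat\varphi$ to vanish on the relevant part of $Z(u)$ inside $M(H^\infty+C)$. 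Then Theorem~\ref{AGGIS1} (or Theorem~\ref{AGGIS} in the interpolating case) gives $\overline u\varphi\in H^\infty+C$, i.e.\ $\varphi\in u(H^\infty+C)$. I would follow Bessonov's own argument here rather than reconstruct it, citing \cite{B2015}.

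For sufficiency, write $\varphi = u\psi$ with $\psi\in H^\infty+C(\TT)$, so $\psi = h + c$ with $h\in H^\infty$ and $c\in C(\TT)$. The key reduction is that $A_\varphi^u = A_{u h}^u + A_{u c}^u$, and $A_{uh}^u = 0$ because for $f\in K_u$ we have $uhf\in uH^2$, hence $P_u(uhf)=0$. So it suffices to show $A_{uc}^u$ is compact for every $c\in C(\TT)$. By density of trigonometric polynomials in $C(\TT)$ and boundedness of $c\mapsto A_{uc}^u$ (with norm $\le\|c\|_\infty$), it suffices to treat $c(z)=z^n$ for $n\in\ZZ$, i.e.\ to show $A_{u z^n}^u$ is compact for each integer $n$.

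The heart of the matter is then: $A_{u z^n}^u$ is compact for all $n\in\ZZ$. For $n\ge 0$, $uz^n\in H^\infty$ and the argument of the previous paragraph shows $A_{uz^n}^u=0$. For $n<0$, write $n=-m$ with $m>0$; then $\overline{A_{u\bar z^m}^u}$ relates by conjugation (using the conjugation $C$ on $K_u$, $Cf=\bar z\,\overline{f}\,\overline{u}\cdot z=u\bar z\overline f$ suitably normalized — I would instead use the identity $A_{\bar\psi}^u=(A_\psi^u)^*$) to $A_{\overline{u z^m}}^u=(A_{uz^m}^u)^*=0$. Wait — this would make everything zero, which is false, so the correct bookkeeping is that $A_\varphi^u$ for $\varphi=uc$ with $c$ a trigonometric polynomial reduces to finitely many terms $A_{u\bar z^m}^u$, and here $u\bar z^m\notin H^\infty$ in general. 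The right approach: $A_{u\bar z^m}^u = A_{u\bar z^m}^u$, and one shows directly it has finite rank. Indeed $P_u(u\bar z^m f) = u P_-(\overline u\cdot u\bar z^m f) = uP_-(\bar z^m f)$, and $P_-(\bar z^m f)$ for $f\in K_u\subseteq H^2$ depends only on the first $m$ Taylor coefficients of $f$; composing with multiplication by $u$ and projecting back to $K_u$ yields an operator of rank at most $m$. Hence $A_{u\bar z^m}^u$ has finite rank, so it is compact.

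The main obstacle I anticipate is the necessity direction: making rigorous the passage from compactness of $A_\varphi^u$ to vanishing of $\hat\varphi$ on $Z(u)\cap M(H^\infty+C)$, especially handling an arbitrary inner $u$ (with a singular inner part, where there are no zeros in $\DD$ to work with and one must argue on support sets of points in the fibers over the support of the singular measure) rather than only an interpolating Blaschke product. For this I would lean on the structure of $M(H^\infty+C)$, the fact that support sets are weak peak sets, and Theorem~\ref{AGGIS1}; but the cleanest route is simply to invoke Bessonov's proof in \cite{B2015}, since the statement is quoted as Proposition~\ref{Bes1} precisely for use as a black box, and the novel contributions of this paper are the surrounding estimates rather than a reproof of this result.
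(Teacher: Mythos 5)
The paper does not prove this proposition at all: it is quoted verbatim from Bessonov \cite[Prop.\ 2.1]{B2015} and used as a black box, so your closing instinct --- cite \cite{B2015} and move on --- is exactly what the authors do, and there is no in-paper argument to compare against. That said, your sufficiency direction is a correct, self-contained proof and is worth recording: $A_{uh}^u=0$ since $uhf\in uH^2$ for $f\in K_u$; uniform approximation of $c$ by trigonometric polynomials together with $\|A_{u(c-p)}^u\|\le\|c-p\|_\infty$ reduces to monomials; and $A_{u\bar z^m}^u$ has rank at most $m$. One small repair: the identity $P_{K_u}g=uP_-(\bar u g)$ holds only for $g\in H^2$, and $u\bar z^m f$ need not lie in $H^2$; since $K_u=H^2\cap u(L^2\ominus H^2)$ is an intersection of two reducing pieces of the decomposition $L^2=(L^2\ominus H^2)\oplus K_u\oplus uH^2$, the correct formula is $P_{K_u}=P_+\,uP_-M_{\bar u}$, giving $A_{u\bar z^m}^uf=P_+\bigl(u\,P_-(\bar z^m f)\bigr)$, which indeed depends only on the first $m$ Taylor coefficients of $f$. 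Your rank bound therefore survives.

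The necessity sketch, however, has a genuine gap and would not work as written. First, even if you establish that $\hat\varphi$ vanishes on $Z(u)\cap M(H^\infty+C(\TT))$, that is not the hypothesis of Theorem~\ref{AGGIS1}: that theorem requires $\hat\varphi(1-|\hat u|)=0$ on all of $M(H^\infty)\setminus\DD$, i.e.\ $\hat\varphi$ must vanish wherever $|\hat u|<1$, which for a general inner function (in particular one with a singular factor) is strictly larger than the zero set; Theorem~\ref{AGGIS} closes this gap only for interpolating Blaschke products. Second, the tail construction $u=b_Nu_N$ with $b_N$ a finite Blaschke product does give $A_\varphi^{u_N}$ unitarily equivalent to the compression of $A_\varphi^u$ to $b_NK_{u_N}$, but to conclude that $\hat\varphi$ vanishes on the corona part of $Z(u)$ you need $\|A_\varphi^{u_N}\|\to 0$, which requires $P_{b_NK_{u_N}}\to 0$ strongly, i.e.\ $\bigcap_N b_NK_{u_N}=\{0\}$; this fails precisely when $u$ has a nontrivial singular inner factor, the case you flag as problematic. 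Since you explicitly defer this direction to \cite{B2015}, the proposal as a whole is acceptable and consistent with the paper's treatment, but the backup argument for necessity should not be presented as a near-proof: it covers, at best, the case where $u$ is an interpolating (or at least purely Blaschke) inner function.
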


It follows from Theorem~\ref{AGGIS} that if $B$ is an interpolating Blaschke product with zero sequence $(z_n)$ and $f \in H^\infty + C(\mathbb{T})$, then 
$A_f^u$ is compact if and only if $f(z_n) \to 0$. \\

Ahern and Clark proved the following theorem (see also \cite{GRW}).

\begin{thm}\cite[Section 5]{AC70}\label{AC} Let $f$ be continuous and let $u$ be an inner function. The operator $A_f^u$ is compact if and only if $f(e^{i \theta}) = 0$ for all $e^{i \theta} \in \supp   u \cap \mathbb{T}$. \end{thm}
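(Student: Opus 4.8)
The plan is to characterize compactness of $A_f^u$ for continuous $f$ via the Bessonov criterion (Proposition~\ref{Bes1}), which says $A_f^u$ is compact if and only if $f \in u(H^\infty + C(\mathbb{T}))$, i.e. $\overline{u}f \in H^\infty + C(\mathbb{T})$ (using that $|u|=1$ a.e., so $f = u\cdot\overline{u}f$ and conversely). Thus the theorem reduces to showing, for $f \in C(\mathbb{T})$, that
\[
\overline{u}f \in H^\infty + C(\mathbb{T}) \iff f \equiv 0 \text{ on } \supp u \cap \mathbb{T}.
\]
The key structural fact I would invoke is the description of $\supp u$ (the support set of $u$ on $\mathbb{T}$, equivalently the set where $u$ fails to extend analytically, which for a Blaschke product is the set of accumulation points of the zeros together with the support of the singular measure): a continuous function $g$ on $\mathbb{T}$ satisfies $\overline{u}g \in H^\infty + C(\mathbb{T})$ precisely when $g$ vanishes on the "spectrum'' of $u$ on the circle. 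I would reconcile this with the $H^\infty+C$ framework used above: since $f \in C(\mathbb{T})$, the product $\overline{u}f$ lies in $H^\infty+C$ iff $f$ kills $u$ in the appropriate sense on $M(H^\infty+C)$.

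First I would prove the easy direction: if $f = 0$ on $\supp u \cap \mathbb{T}$, approximate $f$ uniformly by functions $f_n$ continuous and vanishing on a neighborhood (in $\mathbb{T}$) of $\supp u \cap \mathbb{T}$; on the complementary arc, $u$ extends continuously (indeed analytically across it), and one checks $\overline{u}f_n$ agrees off $\supp u$ with a function continuous on $\mathbb{T}$ — more carefully, $\overline{u}f_n \in C(\mathbb{T}) \subseteq H^\infty + C$ because $f_n$ being supported away from $\supp u$ means $\overline u f_n$ is continuous there and zero near $\supp u$. Since $H^\infty+C$ is closed and $\|\overline u f_n - \overline u f\|_\infty = \|f_n - f\|_\infty \to 0$, we get $\overline u f \in H^\infty+C$, hence $A_f^u$ compact by Proposition~\ref{Bes1}. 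For the converse, suppose $f(\zeta_0) \neq 0$ for some $\zeta_0 \in \supp u \cap \mathbb{T}$; I would use the lower bound ideas already developed — in particular the fact (cf. Proposition~\ref{GR} / \cite{GR2010}) that $\|A_f^u\|$ dominates $|\hat f|$ at points of $Z(u)$ lying over $\zeta_0$ — together with an argument that the relevant truncated Toeplitz operators fail to be compact: one exhibits a normalized sequence in $K_u$ concentrating near $\zeta_0$ (reproducing kernels at points tending to $\zeta_0$, suitably normalized) on which $A_f^u$ does not tend to $0$, or alternatively invokes that $\overline u f \notin H^\infty+C$ since $\zeta_0 \in \supp u$ but $f(\zeta_0) \ne 0$ obstructs membership.

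The main obstacle is the converse direction, and specifically making precise the claim that $\zeta_0 \in \supp u \cap \mathbb{T}$ with $f(\zeta_0) \neq 0$ forces $\overline{u}f \notin H^\infty + C(\mathbb{T})$. The clean way is to quote the known description of $\{\varphi \in C(\mathbb{T}) : \overline u \varphi \in H^\infty+C\}$ as the ideal of continuous functions vanishing on $\supp u \cap \mathbb{T}$ — this is essentially a localization / Chang–Marshall type statement, and a version for interpolating Blaschke products is already available to us as Theorem~\ref{AGGIS}; for general inner $u$ one should cite the corresponding result (Ahern–Clark, or Sarason). If a self-contained argument is wanted, one constructs an explicit peaking-type sequence: pick points $w_k \in \mathbb D$ with $w_k \to \zeta_0$ and $u(w_k) \to 0$ (possible since $\zeta_0$ is in the support), normalize the reproducing kernels $k_{w_k}^u$ in $K_u$, and compute $\langle A_f^u \tilde k_{w_k}^u, \tilde k_{w_k}^u\rangle \to f(\zeta_0) \ne 0$ while $\tilde k_{w_k}^u \to 0$ weakly, contradicting compactness. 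The routine parts — weak convergence to zero of normalized kernels at a boundary point, and the limit of the quadratic form — I would not grind through here, but they follow from standard estimates on $\|k_w^u\|$ and the fact that $f$ is continuous at $\zeta_0$.
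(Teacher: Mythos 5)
Your proposal is correct, and its skeleton matches the paper's: both reduce the theorem, via Bessonov's criterion (Proposition~\ref{Bes1}), to the equivalence ``$\overline{u}f \in H^\infty + C(\mathbb{T})$ if and only if $f$ vanishes on $\supp u \cap \mathbb{T}$'' for continuous $f$. The difference is in how that equivalence is handled. The paper simply quotes it as a classical fact (``$f\overline{u} \in H^\infty+C(\mathbb{T})$ if and only if $f$ vanishes at each discontinuity of $u$'') and routes the derivation through its more general theorem on the powers $u^n$; you instead prove both directions. Your forward direction (truncate $f$ to functions vanishing on a neighbourhood of $\supp u \cap \mathbb{T}$, observe that $\overline{u}f_n$ is then continuous since $u$ extends analytically across the complementary arcs, and use that $H^\infty+C(\mathbb{T})$ is closed) is a clean, complete argument. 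Your converse via normalized reproducing kernels $\tilde{k}^u_{w_k}$ at points $w_k \to \zeta_0$ with $u(w_k)\to 0$ is a genuinely different route: it bypasses the function-algebra statement entirely and exhibits non-compactness directly, since $\langle A_f^u \tilde{k}^u_{w_k}, \tilde{k}^u_{w_k}\rangle \to f(\zeta_0) \ne 0$ while $\tilde{k}^u_{w_k} \to 0$ weakly (one checks $\|k^u_{w_k}\|^2 = (1-|u(w_k)|^2)/(1-|w_k|^2) \to \infty$ and that the cross term $\overline{u(w_k)}\,P[fu](w_k)$ vanishes in the limit). What your approach buys is self-containedness and an explicit quantitative mechanism for the failure of compactness; what the paper's approach buys is brevity and the stronger intermediate theorem characterizing compactness of $A_f^{(u^n)}$ for all $n$. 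The only points you should make explicit if writing this up are the standard facts that $\zeta_0 \in \supp u \cap \mathbb{T}$ is equivalent to $\liminf_{z\to\zeta_0}|u(z)|=0$ (so the $w_k$ exist), and the minor correction that the singular measure belongs to a general inner function, not to a Blaschke product.
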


We provide a more general result below.

\begin{thm} Let $f \in H^\infty + C(\mathbb{T})$ and let $u$ be an inner function. The operator $A_f^{(u^n)}$ is compact for every $n \in \mathbb{N}$ if and only if
\[\lim_{|z| \to 1^-}|f(z)|(1 - |u(z)|) = 0.\] \end{thm}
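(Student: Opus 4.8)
The plan is to reduce the statement about compactness of $A_f^{(u^n)}$ for all $n$ to a statement about the single quantity $\sup_{|z|\to 1}|f(z)|(1-|u(z)|)$, via Bessonov's criterion (Proposition~\ref{Bes1}) applied to the inner functions $u^n$. First I would observe that $A_f^{(u^n)}$ is compact iff $f \in u^n(H^\infty + C(\mathbb{T}))$, i.e. iff $\overline{u^n}f \in H^\infty+C(\mathbb{T})$. So the condition ``$A_f^{(u^n)}$ compact for all $n$'' is equivalent to ``$\overline{u}^{\,n}f \in H^\infty+C(\mathbb{T})$ for every $n$,'' which says precisely that $f$ lies in the algebra $H^\infty+C$ even after multiplying by arbitrarily high powers of $\overline u$; in Gelfand-transform language, $f$ must vanish (at least to ``all orders'' against $u$) on the set where $|\hat u|<1$ in $M(H^\infty)\setminus\mathbb D$. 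This is the regime handled by Theorem~\ref{AGGIS1}: if $f(1-|u|)=0$ on $M(H^\infty)\setminus\mathbb D$, then $f H^\infty[\overline u]\subseteq H^\infty+C$, and in particular $\overline u^{\,n}f\in H^\infty+C$ for all $n$.

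For the ``if'' direction I would argue as follows. The hypothesis $\lim_{|z|\to 1^-}|f(z)|(1-|u(z)|)=0$ on $\mathbb D$ passes to the Gelfand transform: since $|\widehat f|$ and $|\widehat u|$ are continuous on $M(H^\infty)$ and $\mathbb D$ is dense in $M(H^\infty)$ by the corona theorem, the product $|\widehat f|(1-|\widehat u|)$ is continuous and its values on $M(H^\infty)\setminus\mathbb D$ are limits of values $|f(z)|(1-|u(z)|)$ along nets $z\to x$ with $|z|\to 1$; hence $\widehat f(x)(1-|\widehat u(x)|)=0$ for every $x\in M(H^\infty)\setminus\mathbb D$. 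Now apply Theorem~\ref{AGGIS1} with $h=f$: we get $fH^\infty[\overline u]\subseteq H^\infty+C$, so $\overline u^{\,n}f\in H^\infty+C$ for every $n$, i.e. $f\in u^n(H^\infty+C)$, and Proposition~\ref{Bes1} gives that $A_f^{(u^n)}$ is compact for every $n$. One must check the normalization hypothesis $\|u\|\le 1$ of Theorem~\ref{AGGIS1}, which holds since $u$ is inner, and confirm that $f\in H^\infty+C$ has the required form, which is part of the hypotheses.

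For the ``only if'' direction, suppose $A_f^{(u^n)}$ is compact for every $n$, so $\overline u^{\,n}f\in H^\infty+C$ for all $n$. Fix $\varepsilon>0$; I want to show $|f(z)|(1-|u(z)|)<\varepsilon$ for $|z|$ close to $1$. The idea is that for a point $z$ with $|u(z)|$ bounded away from $1$, the functions $\overline u^{\,n}f$ being in $H^\infty+C$ forces $|f(z)|$ to be controlled: using that a function $g\in H^\infty+C$ satisfies $\dist(g, H^\infty)\to 0$ in an appropriate sense near the boundary is not quite what we need, so instead I would work at the level of maximal ideals. Since $\overline u^{\,n}f\in H^\infty+C$ for all $n$, for each $x\in M(H^\infty)\setminus\mathbb D$ with $|\widehat u(x)|<1$ we get $|\widehat f(x)|=|\widehat u(x)|^n|(\overline u^{\,n}f)^\wedge(x)|\le |\widehat u(x)|^n\sup_n\|\overline u^{\,n}f\|$; but this last sup need not be finite, so the cleaner route is: if $\widehat f(x)\ne 0$ at some $x$ with $|\widehat u(x)|<1$, pick $n$ large and note $\overline u^{\,n}f$ still in $H^\infty+C$ is fine, so this alone gives no contradiction — hence one genuinely needs to use that $\widehat f$ must vanish on the whole support set of such an $x$, exploiting that $|\widehat f|$ attains its sup over the support set at a weak peak point and the multiplicativity forces $\widehat f(x)\bigl(1-|\widehat u(x)|\bigr)$ small. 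Concretely: by Proposition~\ref{GR2}-type reasoning, on the set $\{1-|\widehat u|>\delta\}$ one shows $\widehat f$ is small, then let $\delta\to 0$; translating back to $\mathbb D$ via density gives the limit condition. The main obstacle, and where I expect to spend the most care, is exactly this ``only if'' direction — making rigorous the passage from ``$\overline u^{\,n}f\in H^\infty+C$ for all $n$'' to the quantitative boundary decay $|f(z)|(1-|u(z)|)\to 0$, which requires a careful argument on $M(H^\infty)\setminus\mathbb D$ using support sets and the continuity of Gelfand transforms rather than a direct estimate on $\mathbb D$.
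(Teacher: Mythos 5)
Your ``if'' direction is fine and is essentially the paper's argument: pass the hypothesis to $M(H^\infty)\setminus\mathbb{D}$ by continuity of Gelfand transforms and corona density, apply Theorem~\ref{AGGIS1} to get $f\,\overline{u}^{\,n}\in H^\infty+C(\mathbb{T})$ for all $n$, and invoke Proposition~\ref{Bes1}.

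The ``only if'' direction, however, has a genuine gap, and it comes from discarding the right argument for the wrong reason. You write that the estimate $|\widehat f(x)|\le|\widehat u(x)|^{n}\,\sup_n\|\overline{u}^{\,n}f\|$ is useless because ``this last sup need not be finite.'' But it is finite: $u$ is inner, so $|\overline{u}^{\,n}|=1$ almost everywhere on $\mathbb{T}$, and hence $\|\overline{u}^{\,n}f\|_{L^\infty(\mathbb{T})}=\|f\|_{L^\infty(\mathbb{T})}$ for every $n$. Writing $f=u^{n}h_{n}$ with $h_{n}=\overline{u}^{\,n}f\in H^\infty+C(\mathbb{T})$ and $\|h_{n}\|_\infty=\|f\|_\infty$, any $x\in M(H^\infty+C(\mathbb{T}))$ with $|\widehat u(x)|\le r<1$ gives $|\widehat f(x)|\le r^{n}\|f\|_\infty$ for all $n$, so $\widehat f(x)=0$ wherever $|\widehat u(x)|<1$; thus $|\widehat f|\,(1-|\widehat u|)\equiv 0$ on $M(H^\infty)\setminus\mathbb{D}$, and corona density converts this back into the boundary limit on $\mathbb{D}$. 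This is exactly the paper's proof. The replacement you offer instead --- ``by Proposition~\ref{GR2}-type reasoning, on the set $\{1-|\widehat u|>\delta\}$ one shows $\widehat f$ is small, then let $\delta\to 0$'' --- is not carried out and does not obviously work: Proposition~\ref{GR2} is about points where $|\widehat f|$ attains $\|f\|_\infty$, and nothing in your sketch explains why $\widehat f$ should be small merely because $1-|\widehat u|>\delta$ without the power trick above. You correctly identified this as the hard spot but then left it unresolved; restoring the isometry observation $\|\overline{u}^{\,n}f\|_\infty=\|f\|_\infty$ closes the gap immediately.
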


\begin{proof} % This might be using a cannon to kill a mouse, but 
%If $u$ is an arbitrary inner function, the Chang--Marshall theorem implies $H^\infty[\bar{u}] = H^\infty[\bar{b}]$ for some interpolating Blaschke product $b$. 
By our assumption and the corona theorem (which states that $\mathbb{D}$ is dense in $M(H^\infty)$), $|f(x)|(1 - |u(x)|) = 0$ on $M(H^\infty + C(\mathbb{T}))$. By Theorem~\ref{AGGIS1}, $f \overline{u^n} \in H^\infty + C(\mathbb{T})$ for every $n$. Thus, $f \in u^n(H^\infty + C(\mathbb{T}))$ and the result follows from Proposition~\ref{Bes1}.

For the other direction, suppose that $A_f^{(u^n)}$ is compact for every $n$. Then, by Proposition~\ref{Bes1}, $f \overline{u}^n \in H^\infty + C(\mathbb{T})$ for every $n$. If $x \in M(H^\infty + C(\mathbb{T}))$ with $|u(x)| \le r < 1$, we know that $f = u^n h_n$ and $\|h_n\|_\infty = \|f\|$. Therefore, since $x \in M(H^\infty + C(\mathbb{T}))$, we have $|x(f)| = |x(u)^n x(h_n)| \le |x(u)|^n \|f\|_\infty \le r^n$ for every $n$. %Therefore, for $h_j \in H^\infty$ and $j \in \mathbb{N}$ we have $f (\sum_{j = 1}^N h_j \bar{u}^j) \in H^\infty + C(\mathbb{T})$. Since $H^\infty[\bar{u}] = H^\infty[\bar{b}]$ where $b$ is interpolating, we have $f \overline{b}^n \in H^\infty + C(\mathbb{T})$ for all $n \in \mathbb{N}$. Thus, if $|b(x)| < 1$ we have $|f(x)| = 0$. Since $\{x \in M(H^\infty + C(\mathbb{T})): |b(x)| < 1\} = \{x \in M(H^\infty + C(\mathbb{T})): |u(x)| < 1\}$, the result holds. 
Thus $x(f) = 0$ if $|u(x)| < 1$. Therefore $|f(x)| (1 - |u(x)|) = 0$ on $M(H^\infty + C(\mathbb{T}))$. Since $\mathbb{D}$ is dense in $H^\infty$ and $M(H^\infty + C(\mathbb{T})) = M(H^\infty) \setminus \mathbb{D}$ we have
\[\lim_{|z| \to 1^-} |f(z)| (1 - |u(z)|) = 0.\] \end{proof}

To get the Ahern and Clark result (Theorem~\ref{AC}), let $f$ be continuous. Then $f \overline{u} \in H^\infty + C(\mathbb{T})$ if and only if $f$ vanishes at each discontinuity of $u$. This happens if and only if $f$ vanishes at each discontinuity of $u^n$. And this, in turn, happens if and only if $f \bar{u}^n \in H^\infty + C(\mathbb{T})$ for all $n$. Thus, Ahern and Clark's result follows.

 In a second survey on truncated Toeplitz operators \cite[p. 12]{CFT}, Chalendar, Ross, and Timotin say that it would be interesting to give an example of a compact truncated Toeplitz operator with a symbol $\psi \in u(H^\infty + C(\mathbb{T}))$ that has no continuous symbol. We present such an example below.

\begin{exam}\label{ex:4.2}
Let $b$ be an interpolating Blaschke product with zero sequence $(z_n)$ clustering at every point of the unit circle. Then there exists $f \in H^\infty + C(\mathbb{T})$ with $f(z_n) \to 0$ and $f(z_m) \ne 0$ for some $m$ such that $A_f^b$ is compact and $A_f^b$ has no continuous symbol.

\end{exam}

\begin{proof} That such interpolating Blaschke products exist is well known (and, while they are easy to construct by wrapping around the circle  while increasing the modulus at a fast enough rate, it is also clear from the Chang--Marshall theorem that many of these exist). Choose $w_n \ne z_k$ for all $k$ such that $\rho(z_n, w_n) \to 0$ as $n \to \infty$. If $f$ is the corresponding Blaschke product with zeros at $(w_n)$, then $f(z_n) \to 0$ and $f(z_m) \ne 0$ by construction. By Theorem~\ref{AGGIS} and Proposition~\ref{Bes1}, $f \in b(H^\infty + C(\mathbb{T}))$ and $A_f^b$ is compact.

 If $A_f^b$ had a representation with continuous symbol $g$, we would have \cite[Theorem 3.1]{SAR2007} $f - g \in b H^2 + \overline{b} \overline{H^2}$. But $f(z_n) \to 0$, $b(z_n) = \overline{b(z_n)} = 0$ and therefore $g(z_n) \to 0$. But $\{z_n\}$ contains $\mathbb{T}$ in its closure and, since $g$ is continuous, we must have $g = 0$. Since $f(z_m) \ne 0$, this is impossible. \end{proof}
 
Remark: The proof shows that under these assumptions, if there is a continuous symbol $g$, then we must have $g b$ continuous.

 \subsection*{Truncated Hankel operators}

For an inner function $u$ and a suitable symbol $\phi$ the truncated Hankel operator $B^u_\phi: K_u \to \overline{zK_u}=\overline u K_u$
is defined by 
\[
B^u_{\phi}(f)=P_{\overline{zK_u}}(\phi f),
\]
and, as has been observed in \cite[Lem. 3.3]{B2015}, we have
$B^u_\phi(f) = \overline u A^u_{u\phi} f$ for $f \in K_u$. Since multiplication by $\overline u$ is
an isometry from $K_u$ onto $\overline u K_u$, the study
of compactness, boundedness and Schatten-class membership of truncated Hankel operators generally
reduces to that of truncated Toeplitz operators.

Thus Example  \ref{ex:4.2} provides a function $\phi \in (H^\infty+C(\TT)) \cap \overline b(H^\infty+C(\TT))$ with $B^b_\phi$ compact, but with no
symbol $\psi$ such that $b\psi$ is continuous.

\subsection*{Acknowledgements}

This work was partially supported by   grants from the Simons Foundation ($\sharp$243653 to Pamela Gorkin)
and the London Mathematical Society ($\sharp$41621 to Jonathan Partington).
Since August 2018, PG has been serving as a Program Director in the Division of Mathematical Sciences
at the National Science Foundation (NSF), USA, and as a component of this position, she
received support from NSF for research, which included work on this paper. Any opinions,
findings, and conclusions or recommendations expressed in this material are those of the
authors and do not necessarily reflect the views of the National Science Foundation.\\

The authors also wish to thank the referees for their helpful comments.

\noindent
\author{Pamela Gorkin\\ \small{Department of Mathematics, Bucknell University, Lewisburg, PA 17837, U.S.A.} \\ \small{\tt E-mail: pgorkin@bucknell.edu} \and and \\  \\Jonathan R. Partington\\\small{School of Mathematics, University of Leeds, Leeds LS2 9JT, U.K.} \\ \small{\tt E-mail: J.R.Partington@leeds.ac.uk}}


\begin{thebibliography}{99}

\bibitem{AC70} Ahern, P. R. and Clark, D. N., On functions orthogonal to invariant subspaces. {\it Acta Math.} 124 (1970) 191--204.

\bibitem{AM}
Allwright, D. J. and Mees, A. I.,
Stability, extended spaces and numerical ranges.
{\em SIAM J. Control Optim.} 20 (1982), no. 3, 328--337. 

\bibitem{AXLERGORKIN} Axler, Sheldon and Gorkin, Pamela, Divisibility in Douglas algebras. {\em Michigan Math. J.} 31 (1984), no. 1, 89--94.

\bibitem{B2015} Bessonov, R. V., Fredholmness and compactness of truncated Toeplitz and Hankel operators. {\it Integral Equations Operator Theory} 82 (2015), no. 4, 451--467.

\bibitem{BD71}
Bonsall, F.F. and Duncan, J., {\em Numerical ranges of operators on normed spaces and of elements of normed algebras}. London Mathematical Society Lecture Note Series, 2 Cambridge University Press, London-New York, 1971.

 \bibitem{CP16}
  C\^amara, M. Cristina and Partington, Jonathan R., Spectral properties of truncated Toeplitz operators by equivalence after extension. 
  {\em J. Math. Anal. Appl.} 433 (2016), no. 2, 762--784. 

\bibitem{CFT}
 Chalendar, Isabelle, Fricain, Emmanuel and Timotin, Dan, A survey of some recent results on truncated Toeplitz operators. {\em Recent progress on operator theory and approximation in spaces of analytic functions}, 59--77, Contemp. Math., 679, Amer. Math. Soc., Providence, RI, 2016.
 
\bibitem{C2004}
Crouzeix, Michel, Bounds for analytical functions of matrices. 
{\em Integral Equations Operator Theory\/} 48 (2004), no. 4, 461--477.

\bibitem{CP2017}
Crouzeix, M.; Palencia, C.,
The numerical range is a ($1+\sqrt{2}$)-spectral set.
{\it SIAM J. Matrix Anal. Appl.} 38 (2017), no. 2, 649--655.
  
  \bibitem{G2} Daepp, Ulrich; Gorkin, Pamela; Shaffer, Andrew and Voss, Karl, M\"obius transformations and Blaschke products: the geometric connection. {\it Linear Algebra Appl.} 516 (2017), 186--211.
 
 \bibitem{DON}
Donoghue, William F., Jr., On the numerical range of a bounded operator. 
{\em Michigan Math. J.}, 4 (1957) 261--263.

\bibitem{EARL1969}
Earl, J. P., On the interpolation of bounded sequences by bounded functions. {\em J. London Math. Soc.} (2) 2 (1970) 544--548.

\bibitem{fan-tits}
Fan, Michael K. H. and Tits, Andr\'e L.,
$m$-form numerical range and the computation of the structured singular value.
{\em IEEE Trans. Automat. Control\/} 33 (1988), no. 3, 284--289. 

\bibitem{FT87}
Foias, C. and Tannenbaum, A.,   On the Nehari problem for a certain class of $L^\infty$-functions appearing in control theory. {\em J. Funct. Anal.} 74 (1987), no. 1, 146--159. 

\bibitem{FTZ88}
Foias C., Tannenbaum A., and Zames, G.,
Some explicit formulae for the singular values of certain
              {H}ankel operators with factorizable symbol.
{\em SIAM J. Math. Anal.}
19 (1988), no. 5, 1081--1089.

\bibitem{F}
Fujimura, Masayo, Inscribed ellipses and Blaschke products. {\em Comput. Methods Funct. Theory\/} 13 (2013), no. 4, 557--573.

\bibitem{HG2010} Gaaya, Haykel On the numerical radius of the truncated adjoint shift. {\em Extracta Math}. 25 (2010), no. 2, 165--182.

\bibitem{GR2010}
Garcia, Stephan Ramon, and Ross, William T.,
The norm of a truncated Toeplitz operator. {\em Hilbert spaces of analytic functions}, 59 -- 64, 
CRM Proc. Lecture Notes, 51, Amer. Math. Soc., Providence, RI, 2010.

\bibitem{GR2013}
Garcia, Stephan Ramon and Ross, William T., Recent progress on truncated Toeplitz operators. Blaschke products and their applications, 275 -- 319, {\em Fields Inst. Commun.}, 65, Springer, New York, 2013.

\bibitem{GRW}
Garcia, Stephan Ramon, Ross, William T., and Wogen, Warren R., C*-algebras generated by truncated Toeplitz operators. 
{\em Concrete operators, spectral theory, operators in harmonic analysis and approximation}, 
181--192, Oper. Theory Adv. Appl., 236, Birkh\"auser/Springer, Basel, 2014.

\bibitem{GARNETT}
Garnett, John B., {\em Bounded analytic functions}. Pure and Applied Mathematics, 96.  Academic Press, Inc. [Harcourt Brace Jovanovich, Publishers], New York--London, 1981.




\bibitem{Gau06}
Gau, Hwa-Long, Elliptic numerical ranges of $4\times 4$ matrices. {\em Taiwanese J. Math.} 10 (2006), no. 1, 117--128.

\bibitem{GW}
Gau, Hwa-Long and Wu, Pei Yuan,  Numerical range of S($\phi$). {\em Linear and Multilinear Algebra\/} 45 (1998), no. 1, 49--73.

\bibitem{GW1}
Gau, Hwa-Long and Wu, Pei Yuan,  Numerical range and Poncelet property. {\em Taiwanese J. Math.} 7 (2003), no. 2, 173--193.

\bibitem{GW2}
Gau, Hwa-Long and Wu, Pei Yuan,  Numerical range circumscribed by two polygons. {\em Linear Algebra Appl.} 382 (2004), 155--170.

%\bibitem{G1}
%Gorkin, Pamela; Wagner, Nathan, Ellipses and compositions of finite Blaschke products. {\it J. Math. Anal. Appl.} 445 (2017), no. 2, 1354--1366.





\bibitem{GW2016}
Gorkin, Pamela and Wagner, Nathan,
Ellipses and compositions of finite Blaschke products.
{\em J. Math. Anal. Appl.} 445 (2017), no. 2, 1354--1366. 

\bibitem{GUILLORYIZUCHISARASON} Guillory, Carroll, Izuchi, Keiji, and Sarason, Donald,
Interpolating Blaschke products and division in Douglas algebras. 
{\it Proc. Roy. Irish Acad. Sect.} A 84 (1984), no. 1, 1--7.

\bibitem{GR}
Gustafson, Karl E. and Rao, Duggirala K. M., {\it Numerical range. The field of values of linear operators and matrices}. Universitext. Springer-Verlag, New York, 1997.

\bibitem{HH92}
Haagerup U. and de la Harpe,  P.,
The numerical radius of a nilpotent operator on a Hilbert space. 
{\em Proc. Amer. Math. Soc.} 115 (1992), no. 2, 371--379. 


\bibitem{HOFFMAN}
Hoffman, Kenneth, Bounded analytic functions and Gleason parts. {\it Ann. of Math.} (2) 86 (1967), 74--111.

\bibitem{HOFFMANBOOK}
Hoffman, Kenneth, Banach spaces of analytic functions. Reprint of the 1962 original. {\it Dover Publications}, Inc., New York, 1988.


\bibitem{KRS1997}
Keeler, Dennis S., Rodman, Leiba, and Spitkovsky, Ilya M.,
The numerical range of $3 \times 3$ matrices.
{\it Linear Algebra Appl.} 252 (1997), 115--139.

\bibitem{KIPP} Kippenhahn, Rudolf, On the numerical range of a matrix. Translated from the German by Paul F. Zachlin and Michiel E. Hochstenbach. {\em Linear Multilinear Algebra} 56 (2008), no. 1--2, 185--225.

\bibitem{CKLI} Li, Chi-Kwong,
A simple proof of the elliptical range theorem. 
{\it Proc. Amer. Math. Soc.} 124 (1996), no. 7, 1985--1986.

\bibitem{M} Mirman, Boris, UB-matrices and conditions for Poncelet polygon to be closed. {\it Linear Algebra Appl.} 360 (2003), 123--150. 

\bibitem{P97} Partington, Jonathan R., {\em Interpolation, identification, and sampling}. London Mathematical Society Monographs. New Series, 17. The Clarendon Press, Oxford University Press, New York, 1997.




\bibitem{sarason67}
Sarason,  D., Generalized interpolation in $H^\infty$. {\em Trans. Amer. Math. Soc.} 127 (1967), 179--203.

\bibitem{SAR2007} Sarason, D., Algebraic properties of truncated Toeplitz operators, {\it Oper. Matrices\/} 1 (2007), no. 4, 491--526.

\bibitem{NF} Sz.-Nagy, B\'{e}la, Foias, Ciprian, Bercovici, Hari and K\'erchy, L\'aszl\'o, 
{\em Harmonic analysis of operators on Hilbert space}. Second edition. Revised and enlarged edition. Universitext. Springer, New York, 2010.


\end{thebibliography}
\end{document}